\documentclass[11pt]{article}

\usepackage{graphicx,psfrag, subcaption}
\usepackage{enumitem}

\usepackage{amssymb,amsmath,amsthm}
\usepackage{fullpage}

\usepackage{amsmath,amsthm,amsfonts,amssymb,epsfig,verbatim, fge}
\usepackage[normalem]{ulem}
\usepackage{xcolor}
\usepackage{circledsteps}
\usepackage{url}
\usepackage{breqn}
\usepackage{bbm}
\usepackage[colorlinks, citecolor=blue]{hyperref}
\newtheorem{problem}{Problem}
\newtheorem{mainthm}{Theorem}
\newtheorem{thm}{Theorem}[section]
\newtheorem{lem}[thm]{Lemma}
\newtheorem{prop}[thm]{Proposition}

\newtheorem{claim}[thm]{Claim}
\newtheorem{rem}{Remark}

\numberwithin{equation}{section}

\newcommand{\E}{\mathbb{E}}

\newcommand{\prob}{\mathbb{P}}

\newcommand{\R}{\mathbb{R}}
\newcommand{\C}{\mathbb{C}}
\newcommand{\cS}{\mathcal{S}}

\newcommand{\ccW}{\overline W}

\newcommand{\cB}{\mathcal{B}}
\newcommand{\cE}{\mathcal{E}}

\newcommand{\eps}{\varepsilon}
\renewcommand{\setminus}{\mathbin{\fgebackslash}}

\title{Limiting Root Distribution of Random Log-concave Polynomials}
\author{Ohad Noy Feldheim \thanks{Email: ohad.feldheim@mail.huji.ac.il. Partly supported by ISF personal research grant 3541/24.} \and Arnab Sen \thanks{Email: arnab@umn.edu. Partly supported by Simons Foundation MP-TSM-00002716.}}
\date{\today}
\begin{document}
\maketitle
%\section{}
%\subsection{}

\begin{abstract}
 
We introduce two probabilistic models of random log-concave polynomials, the uniform model and the beta model, and study the asymptotic distribution of their zeros in the complex plane. In the uniform model, we show that the empirical root distribution converges to the uniform probability measure on the unit circle, placing the model in the same universality class as classical Kac polynomials. In contrast, in the beta model log-concavity is amplified through exponential scaling of the coefficients, leading to a new limiting distribution that is rotationally symmetric and absolutely continuous with respect to Lebesgue measure on the plane.
\end{abstract}

\section{Introduction}

A polynomial $P_n(z)=\sum_{i=0}^n a_i z^i$ with non-negative real coefficients is called \emph{log-concave} if its coefficients form a log-concave sequence, that is,
\[
a_{i-1}a_{i+1}\le a_i^2, \qquad 1\le i\le n-1.
\]
Log-concavity arises naturally and extensively in algebraic and enumerative combinatorics, as well as in geometry and probability, see the surveys \cite{stanley1989log, brenti1994log, branden2015unimodality}. Many generating functions that enumerate combinatorial structures have log-concave coefficients, and hence give rise to log-concave polynomials. Examples include the matching generating polynomials of graphs \cite{heilmann1972theory}, the polynomials obtained from the absolute values of the coefficients of chromatic polynomials of graphs and characteristic polynomials of matroids \cite{huh2012milnor, huhkatz2012bergman, adiprasito2018hodge}, as well as Steiner polynomials of convex bodies \cite{schneider2013convex, henk2012steiner}, among many others.

The zeros of generating functions play a central role in coefficient asymptotics. In statistical mechanics, the zeros of the partition function likewise carry important information about phase transitions through the Lee-Yang theory. Moreover, the location of zeros can have strong probabilistic consequences: certain zero-free regions imply central limit and local limit theorems for the normalized coefficients, see \cite{lebowitz2016central, michelen2019roots, michelen2026geometry}.

By Newton's inequalities, if a polynomial with non-negative coefficients has only real zeros, then its coefficient sequence is (ultra) log-concave. The converse fails in general, as shown for instance by the $q$-factorial polynomials $[n]_z!:=\prod_{j=1}^n (1+z+\cdots+z^{j-1})$, see \cite[Example 1.1]{branden2015unimodality}. In fact, the zeros of the $q$-factorial polynomials are roots of unity, and their normalized zero counting measures converge to the uniform probability measure on $\mathbb S^1$. The truncated exponential polynomials $s_n(z)=\sum_{k=0}^n z^k/k!$ provide another family of log-concave polynomials whose rescaled zeros converge to a probability measure supported on the Szeg\H{o} curve $\{z\in\C: |ze^{1-z}|=1,\ |z|\le 1\}$ \cite{pritsker1997szego}, which, in particular,  is not concentrated on the unit circle. Although log-concavity is strictly weaker than real-rootedness, sufficiently strong log-concavity does imply real-rootedness. More precisely, a theorem of Kurtz \cite{kurtz1992sufficient} states that if the polynomial $\sum_{i=0}^n a_i z^i$ has positive coefficients and satisfies $a_i^2>4a_{i-1}a_{i+1}$ for $1\le i\le n-1$, then all its zeros are real and negative; see also \cite{handelman2013arguments} for refinements.

 Given the prominence of log-concave polynomials, one may ask what the zeros of a ``typical'' log-concave polynomial of large degree look like.
 To address this question, we introduce and study two natural probabilistic models of random log-concave polynomials, the {\em uniform model} and the {\em beta model}, and determine the asymptotic distribution of their zeros in the complex plane.

\subsection{The Uniform Model}
Consider a coefficient vector $(a_0, a_1, \ldots, a_n)$ chosen uniformly from $[0,1]^{n+1}$ subject to the log-concavity constraint. More precisely, let $U_0, U_1, \ldots, U_n$ be i.i.d.\ $U(0,1)$ random variables, define the event $\mathcal{L}:=\{U_{i-1}U_{i+1}\le U_i^2:\ 1\le i\le n-1\}$, namely the sequence $U_0, U_1, \ldots, U_n$ is log-concave. Consider
\begin{equation}\label{def:a}
    (a_0, a_1, \ldots, a_n)\stackrel{d}{=}(U_0, U_1, \ldots, U_n)\mid \mathcal{L},
\end{equation}
which defines the uniform distribution on the set of log-concave sequences in $[0,1]^{n+1}$. 
We call $P_n(z) = \sum_{i=0}^n a_i z^i$ a random log-concave polynomial drawn from the \emph{uniform model}.

Since log-concavity and the zero set are invariant under positive scalar multiplication of the coefficients, replacing $[0,1]^{n+1}$ by $[0,T_n]^{n+1}$ for any $T_n>0$ yields an equivalent model, and hence we may assume $T_n=1$.

Another equivalent description of the uniform model is obtained by introducing a random convex sequence.  Given a vector $x=(x_0,x_1,\ldots,x_n)$, define the first differences $\Delta_i x=x_i-x_{i-1}$ for $1\le i\le n$ and the second differences $\Delta_i^2 x=\Delta_i x-\Delta_{i-1}x$ for $2\le i\le n$. Let $X_0,X_1,\ldots,X_n$ be i.i.d.\ $\mathrm{exp}(1)$ random variables, and write $X=(X_0,X_1,\ldots,X_n)$. We say that $(\xi_0,\xi_1,\ldots,\xi_n)$ is a \emph{random convex sequence} if
\begin{equation}\label{def:random_convex}
(\xi_0,\xi_1,\ldots,\xi_n)\stackrel{d}{=}(X_0,X_1,\ldots,X_n)\mid \mathcal{C},
\end{equation}
where $\mathcal{C}:=\{\Delta_i^2 X\ge 0:\ 2\le i\le n\}$
is the convexity event. Since $U_i:=e^{-X_i}$ are i.i.d.\ $U(0,1)$ random variables, and the event $(U_0,U_1,\ldots,U_n)\in\mathcal L$ is equivalent to $(X_0,X_1,\ldots,X_n)\in\mathcal C$, it follows that
\begin{equation}\label{eq:xi_a}
(a_0,a_1,\ldots,a_n)\stackrel{d}{=}(e^{-\xi_0},e^{-\xi_1},\ldots,e^{-\xi_n}),
\end{equation}
hence the corresponding polynomial $P_n(z) = \sum_{i=0}^n e^{-\xi_i} z^i $ is drawn from the uniform model.

As it turns out, these random polynomials have the same limiting root distribution as the unconditioned polynomial with i.i.d.\ coefficients uniformly distributed on $[0,1]$ (See Figure~\ref{fig:root of unif}). The latter is an example of a Kac polynomial, and its limiting root distribution is the uniform probability measure on the unit circle; see \cite{shepp1995complex,ibragimov1997roots}. More precisely,
 \begin{mainthm}\label{thm:uniform}
    Write $P_n(z) = \sum_{i=0}^n a_i z^i$ for a random log-concave polynomial of degree $n$ drawn from the uniform model, and denote its roots by $\zeta_{n,1},\dots, \zeta_{n,n}$. Then, as $n \to \infty,$ almost surely, 
 \[ d_L\Big (n^{-1} \sum_{k=1}^n \delta_{\zeta_{n,k}}, \mathsf{U}(\mathbb{S}^1) \Big) \to  0, \]   
    where $\mathsf{U}(\mathbb{S}^1)$ is the uniform distribution on the unit circle $\mathbb{S}^1$ and $d_L$ is  the L\'evy-Prokhorov metric on $\C$.
\end{mainthm}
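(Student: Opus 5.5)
We will use the classical deterministic criterion of Erdős--Turán type, in the form used for Kac polynomials (for instance the Hughes--Nikeghbali criterion). It says that the empirical root measure of $P_n$ converges to $\mathsf{U}(\mathbb{S}^1)$ as soon as two conditions hold. The first is $a_0a_n\neq 0$. The second is
\[
\frac1n\log\sum_{i=0}^n a_i \;-\;\frac{1}{2n}\log a_0 \;-\; \frac{1}{2n}\log a_n \;\longrightarrow\; 0 .
\]
Since $a_i\le 1$, the first term is at most $\frac1n\log(n+1)\to 0$. It is also bounded below by $\frac1n\log \max_i a_i$. So it suffices to prove, almost surely,
\[
\frac{1}{n}\,\xi_0\to 0,\qquad \frac1n\,\xi_n\to 0,\qquad \frac1n\min_i \xi_i\to 0 ,
\]
where $\xi$ is the random convex sequence from \eqref{def:random_convex} and $a_i=e^{-\xi_i}$ as in \eqref{eq:xi_a}. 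To get almost sure convergence, with no coupling between different $n$, we will prove tail bounds of the form $\prob(\xi_0+\xi_n > \eps n)\le e^{-c(\eps) n}$, or at least bounds that are summable in $n$, and then apply Borel--Cantelli. The minimum is handled the same way, since $0\le \min_i\xi_i\le \xi_0$.

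The main step is to understand the law of $\xi$. Its density is proportional to $e^{-\sum_i x_i}$ on the cone of nonnegative convex sequences. We parametrize this cone by the location $m$ of the minimum, the minimum value $h=\xi_m\ge 0$, and the increments. To the right of $m$ the increments $d_j=\Delta_j\xi\ge0$ are nondecreasing, and to the left they mirror this. In these coordinates $\sum_i \xi_i = (n+1)h + \sum_{j} (\text{distance weight})\,d_j$, which is linear. Writing nondecreasing increments as sums of nonnegative second differences $s_k=\Delta_k^2\xi$ makes the density a product of exponentials. The coefficient of $s_k$ in $\sum_i\xi_i$ is a quadratic-size quantity $w_k$ of order $(n-k)^2$ or $k^2$, and the coefficient of $h$ is $n+1$.

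Hence, conditional on $m$, the variables $h$ and $s_k$ are independent exponentials with rates $n+1$ and $w_k$, where $w_k\ge c\,(\text{dist}(k,\text{end}))^2$ up to boundary terms. The endpoint value is then
\[
\xi_n = h+\sum_{k} (\text{lever arm})_k\, s_k ,
\]
where lever arms are of order $\text{dist}(k,\text{end})$. Its mean is therefore of order $\sum_{\ell\ge1} \ell/\ell^2\approx \log n$. Exponential-moment bounds, using that each summand is exponential with mean $\lesssim 1/\ell$, give $\prob(\xi_n>\eps n)\le e^{-c\eps n}$ uniformly in $m$. The same holds for $\xi_0$.

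The main obstacle is the careful bookkeeping of this change of variables near the minimum location $m$. There are two sides, and the last increment on each side is free, so the boundary weights there need to be computed. We must also control the marginal law of $m$, which has a Jacobian-free but $m$-dependent normalizing constant $\prod_k w_k^{-1}$. Fortunately we only need bounds uniform in $m$, so averaging over $m$ costs nothing. Alternatively we can make the split deterministic by conditioning on the index of the minimum, which is a.s. unique. If the exact decomposition proves messy, a cruder fallback suffices. The cone is convex and the density is log-concave, so $\xi_0+\xi_n$ is a linear functional of a log-concave vector and has exponential tails at scale its mean. It then suffices to show $\E[\xi_0+\xi_n]=o(n)$, which the heuristic above gives as $O(\log n)$.
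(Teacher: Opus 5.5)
Your proposal is correct and follows essentially the paper's route. Both arguments use the Hughes--Nikeghbali criterion together with the representation of Lemma~\ref{lem:rep}: conditionally on the location of the minimum, the minimum value and the second differences are independent exponentials with rates $n+1$ and $T_\ell$, so the endpoint value is $h+\sum_j \frac{2}{j+1}E_j$ with mean $O(\log n)$. The only difference is cosmetic: you get summable tails from an exponential-moment (Chernoff) bound uniform in the minimum location, whereas the paper bounds $\frac{W_0+W_n}{2}-W_R\le 2M_n(1+\log(n+1))$ and uses $M_n=\max_m E_m=O(\log n)$ a.s.\ via Borel--Cantelli; both arguments are insensitive to how different $n$ are coupled.
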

\begin{figure}[htbp]
  \centering
\includegraphics[scale=.9]{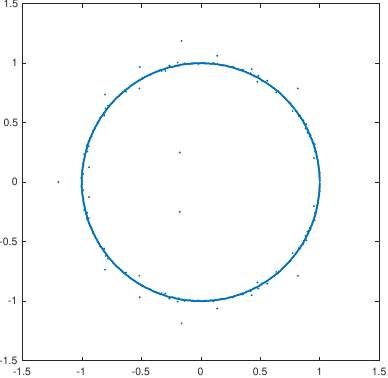}
 \caption{Roots of the uniform model for $ n=1200$.}
 \label{fig:root of unif}
\end{figure}

\subsection{The Beta Model}\label{sec:beta}
While the uniform model is perhaps the most natural one to consider, Theorem~\ref{thm:uniform} shows that its macroscopic root distribution lies in the same universality class as that of a Kac polynomial. The reason is that conditioning on log-concavity makes the coefficients vary only polynomially in $n$; indeed, most coefficients are of the same order. This places the model within the scope of classical Erd\H{o}s-Tur\'an type results, such as those in \cite{hughes2008zeros}; see Theorem~\ref{thm:uniform_dist_root_criteria} below. To obtain a model whose limiting zero distribution displays richer behavior than the uniform measure on the unit circle, we turn to an ensemble in which exponential scaling magnifies the effect of log-concavity.

Let $a_0,a_1,\ldots,a_n$ be a uniformly chosen log-concave sequence as in \eqref{def:a}, and define
\begin{equation}\label{eq:b=a^n}
(b_0,b_1,\ldots,b_n)\stackrel{d}{=}(a_0^n,a_1^n,\ldots,a_n^n).
\end{equation}
The sequence $(b_0,b_1,\ldots,b_n)$ is again log-concave. We say that the corresponding polynomial $P_n(z)=\sum_{i=0}^n b_i z^i$ is drawn from the \emph{beta model}. Indeed, as $U\sim U(0,1)$ implies $U^n\sim \mathrm{Beta}(1/n,1)$, the coefficients $b_0,b_1,\ldots,b_n$ can equivalently be viewed as i.i.d.\ $\mathrm{Beta}(1/n,1)$ random variables conditioned to be log-concave. This explains the term \emph{beta model}. By \eqref{def:random_convex}, the beta model can also be expressed in terms of a random convex sequence as  $P_n(z) = \sum_{i=0}^n e^{-n\xi_i} z^i$.

 The next theorem, which is the main result of the paper, identifies the limiting root distribution of random polynomials drawn from the beta model. As in the uniform model, the limiting measure is rotationally symmetric; however, in the beta model it is absolutely continuous with respect to two-dimensional Lebesgue measure on the plane. Simulations of this model are provided in Figure~\ref{fig:side-by-side}.

\begin{mainthm}\label{thm:main_beta}
    Let $P_n(z) = \sum_{i=0}^n b_i z^i$ be a random log-concave polynomial of degree $n$ drawn from the beta model, and denote its roots by $\zeta_{n,1},\dots, \zeta_{n,n}$. Then, as $n\to\infty$, the empirical root measure converges in probability to $\mu$ in L\'evy-Prokhorov distance, that is,
\begin{equation}\label{eq:main_1_2} d_L\Big (n^{-1} \sum_{k=1}^n \delta_{\zeta_{n,k}}, \mu \Big) \stackrel{p}{\to} 0, \end{equation}
    where $\mu$ is the absolutely continuous probability measure on $\mathbb{C}$ with density
    \[
\frac{d\mu}{d z} 
= \frac{1}{\pi |z|^2 \bigl(4 + |\log |z||\bigr)^2},
\qquad z \in \mathbb{C}\setminus\{0\}.
\]
\end{mainthm}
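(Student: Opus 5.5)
The plan is to use the log-potential method. Write $t=\log|z|$. Since $b_i=e^{-n\xi_i}$ with $(\xi_i)$ a random convex sequence as in \eqref{def:random_convex}, we have
\[
\frac1n\log|P_n(z)|\le \frac{\log(n+1)}{n}+\max_{0\le i\le n}\Bigl(-\xi_i+\tfrac{i}{n}t\Bigr).
\]
The convergence \eqref{eq:main_1_2} will follow from three steps:
\begin{itemize}
\item[(i)] a deterministic limit shape $\xi_{\lfloor nx\rfloor}\to\varphi(x)$, uniformly on $[0,1]$ in probability, for a convex function $\varphi$;
\item[(ii)] the convergence $\frac1n\log|P_n(z)|\to\varphi^*(\log|z|)$ in probability for Lebesgue-a.e.\ $z$, where $\varphi^*(t)=\sup_{x\in[0,1]}(xt-\varphi(x))$, together with uniform integrability of $\frac1n\log|P_n|$ on compacts;
\item[(iii)] computation of $\frac1{2\pi}\Delta\,\varphi^*(\log|z|)$.
\end{itemize}
The standard criterion (pointwise convergence in probability of normalized log-moduli plus local $L^1$ tightness implies convergence in probability of the empirical root measures, as in Tao--Vu or Kabluchko--Zaporozhets) then gives \eqref{eq:main_1_2}.

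Step (iii) also identifies which $\varphi$ step (i) must produce. The radial Laplacian gives density $(\varphi^*)''(t)/(2\pi|z|^2)$, so we need $(\varphi^*)''(t)=2/(4+|t|)^2$. As a check, $\int_{\R}2(4+|t|)^{-2}\,dt=1$, so the total mass is $1$. By Legendre duality, $(\varphi^*)''(\varphi'(x))=1/\varphi''(x)$, so the target is the ODE
\[
\varphi''=\tfrac12\bigl(4+|\varphi'|\bigr)^2,
\]
with $\varphi'$ ranging over all of $\R$ on $(0,1)$. This means blow-up at the endpoints, which matches $(\varphi^*)'$ ranging over $(0,1)$.

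\textbf{Step (i): the limit shape.} I would reparametrize the convex cone by the location $m$ of the minimum, the minimal value $\xi_m\ge0$, and the second differences $D_k=\Delta_k^2\xi\ge0$ on each side of $m$. The density $e^{-\sum_i\xi_i}$ is linear in these variables. Conditional on $m$ (and on the slopes adjacent to the minimum), the $D_k$ become independent exponentials with explicit rates, roughly of order $(\text{distance to the boundary})^2$. One then computes the marginal law of $m$, and it should concentrate. A law of large numbers for the sums $\xi_i=\xi_m+\sum(\text{weights})\,D_k$ then gives a deterministic profile. One must verify that this profile obeys the ODE above, with the constant $4$ coming from the boundary behaviour of the slopes. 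I would carry this out by a variational (large deviation) computation: maximize the entropy of the cone volume minus $\int\varphi$. The Euler--Lagrange equation should give exactly $\varphi''=\frac12(4+|\varphi'|)^2$. Uniform convergence of convex functions then follows from pointwise convergence on a dense set.

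\textbf{Step (ii): convergence of log-potentials.} The upper bound $\limsup\le\varphi^*(t)$ follows from the display above and step (i). The lower bound is the main obstacle, because cancellation among the dominant terms has to be ruled out. Since $x\mapsto xt-\varphi(x)$ is strictly concave, only indices in a window of size $o(n)$ around $nx_t$ contribute within a factor $e^{\epsilon n}$. I would fix one index $k$ in that window and use the Markov structure of the convex sequence. Conditional on all $\xi_j$ with $j\ne k$, the variable $\xi_k$ has an explicit truncated-exponential law on the interval $[\,\cdot\,,(\xi_{k-1}+\xi_{k+1})/2]$ dictated by convexity, and this interval has length $\gtrsim n^{-C}$ with high probability. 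Hence $b_k=e^{-n\xi_k}$ has a conditional density spreading over a multiplicative range, and a small-ball estimate gives
\[
\prob\bigl(|P_n(z)|<e^{-\epsilon n}\,b_k|z|^k\bigr)\to0 .
\]
Uniform integrability on compacts follows from the same upper bound together with the easy estimate $\frac1n\log|b_0|=-\xi_0=O_p(1)$, since the constant term controls $\int\log|P_n|$ over circles via Jensen's formula. This completes the argument.
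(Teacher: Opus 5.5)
Your architecture matches the paper's. The reparametrisation by the minimiser and the second differences is Lemma~\ref{lem:rep}, concentration of the minimiser is Lemma~\ref{lem:R_conc}, and the max-term upper bound is the same. Your lower bound resamples $\xi_k$ given $(\xi_j)_{j\ne k}$: this is a truncated exponential on an interval of length at least $\frac12(\xi_{k-1}-2\xi_k+\xi_{k+1})\gtrsim n^{-3}$ w.h.p. That is the same mechanism as the paper's $(X,Y,Z)$ change of variables, in a slightly cleaner form. Your ODE check is also correct: $\psi''=\frac12(4+|\psi'|)^2$, and $\varphi^*(\log|z|)=G(z)$ for $\varphi(x)=\psi(x-\frac12)$. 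The variational detour is unnecessary, since the explicit conditional means already give $\psi$.

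The step that fails as written is the tightness argument, because the estimate $\frac1n\log b_0=-\xi_0=O_p(1)$ is false.
\begin{itemize}
\item By Lemma~\ref{lem:rep}, $\xi_0=\frac{E_0}{n+1}+\sum_{j=1}^{R}\frac{2}{j+1}E_{j-R-1}$, so $\xi_0=2\log n+O_p(1)$.
\item This is forced by your own limit shape: $\varphi'(x)=4-2/x$ gives $\varphi(x)\sim-2\log x$ as $x\downarrow 0$.
\item For the same reason, ``uniformly on $[0,1]$'' in step (i) must become locally uniform convergence on $(0,1)$ plus an edge bound $\min_{i\le \delta n}\xi_i\ge K$ w.h.p., as in Lemma~\ref{lem:edge}. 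That part does follow from convexity and concentration of the minimiser.
\end{itemize}
Consequently, Jensen's formula centred at $0$ only bounds the circular means below by $-2\log n\,(1+o_p(1))$, which gives no tightness. This is exactly the singularity $G(z)=-2\log|\log|z||+O(1)$ at the origin. The paper treats it separately, via Proposition~\ref{prop:bd_puncture} and the Jensen comparison between radii $\delta$ and $\sqrt\delta$.

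The repair uses only what you already have: anchor at a point $z_0\neq 0$ instead of at $0$.
\begin{itemize}
\item On $D(0,2r)$ write $u_n=\frac1n\log|P_n|=\int_{D(0,2r)}\log|z-w|\,d\mu_n(w)+h_n(z)$. Here $h_n$ collects the leading coefficient and the zeros outside $D(0,2r)$, so it is harmonic there and $|h_n-h_n(0)|\le\log 2$ on $D(0,r)$.
\item Averaging over $|z|=r$ and using $u_n\le\frac{\log(n+1)}{n}+\log^+|z|$ bounds $h_n(0)$ from above.
\item Evaluating at one fixed $z_0\in D(0,r)\setminus\{0\}$ and using your pointwise lower bound $u_n(z_0)\ge G(z_0)-o_p(1)$ bounds $h_n(0)$ from below.
\end{itemize}
Hence $\sup_{D(0,r)}|h_n|=O_p(1)$ and $\int_{D(0,r)}|u_n|^p=O_p(1)$ for every $p<\infty$. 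This is the tightness the Tao--Vu/Kabluchko--Zaporozhets criterion needs.

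With this fix your route (pointwise convergence in probability for each $z\ne0$, plus $L^p_{\mathrm{loc}}$ tightness) is a valid alternative to the paper's deterministic Bloom--Dauvergne-type criterion. It also spares you the uniform-over-annuli upper bound and the subsequence diagonalisation that the paper needs.
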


\begin{rem}
The limiting density of $\mu$ is radially symmetric, and the radial component has the density
\[ f(r) = \frac{2}{r(4+ |\log r|)^2}, \ \ r >0.\]
\end{rem}

\begin{figure}[t]
  \centering
  \begin{subfigure}{0.48\linewidth}
    \centering
      \includegraphics[width=\linewidth]{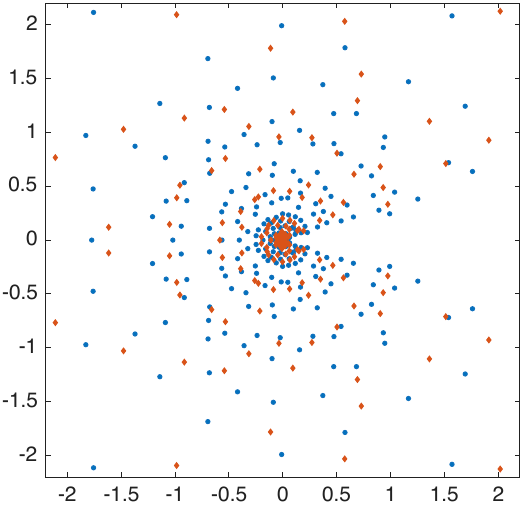}
    \label{fig:left}
  \end{subfigure}\hfill
  \begin{subfigure}{0.48\linewidth}
    \centering
    \includegraphics[width=\linewidth]{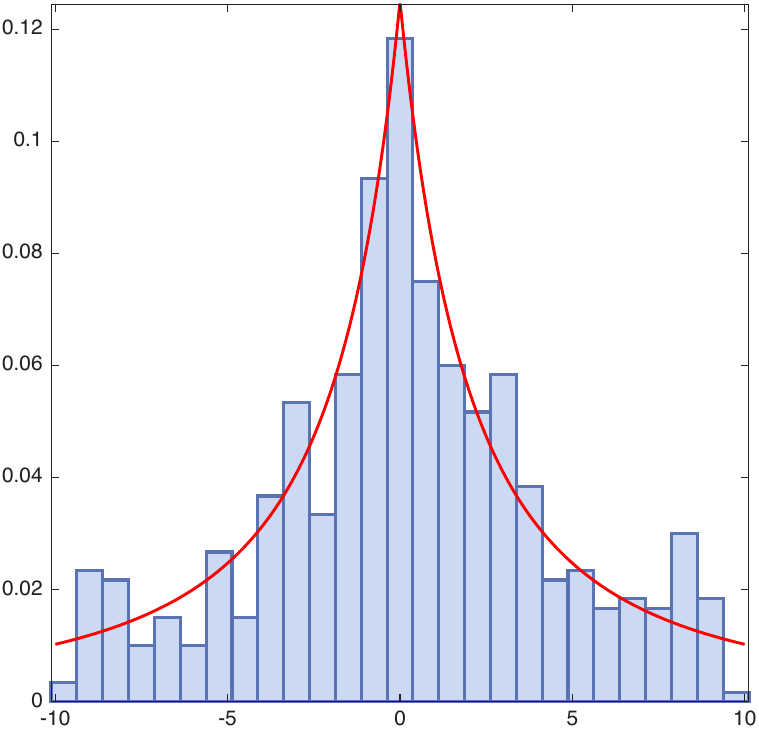}
    \label{fig:right}
  \end{subfigure}
 \caption{Left: roots of the beta model for $n=400$ (orange) and $n=800$ (blue), restricted to $[-2,2]^2$. Right: histogram of $\log|\zeta_{n, k}|$ for $n=800$, overlaid with the theoretical log-radial density $f_{\log |\zeta|}(x)=\frac{2}{(4+|x|)^2}$ for $x\in\mathbb{R}$. \\[0.6ex]
Although the limiting root distribution is rotationally symmetric, the visible cone around the positive $x$-axis that seems to contain no zeros is somewhat puzzling. Since all coefficients are strictly positive, the polynomial has no positive real root, so the apparent zero-free cone is likely a finite-$n$ effect. Indeed, as $n$ increases from $400$ to $800$, the cone already becomes noticeably narrower. Because the coefficients vary exponentially in $n$, we compute the roots using variable-precision arithmetic in MATLAB, which makes simulations for larger $n$ increasingly expensive.}
\label{fig:side-by-side}
\end{figure}

One may also ask what happens under more general coefficient scalings. For instance, in place of \eqref{eq:b=a^n}, one could take
\[
b_i=a_i^{n^\alpha}, \qquad \alpha\in(0,\infty),
\]
so that the beta model corresponds to the critical case $\alpha=1$. For $0<\alpha<1$, the model still lies in the universality class of Kac polynomials, and a result similar to Theorem~\ref{thm:uniform} is expected, since the coefficients vary only subexponentially. By contrast, when $\alpha>1$, the polynomial appears to become ill-conditioned, and one expects the limiting root distribution to degenerate to $(1/2)\delta_0+(1/2)\delta_{\infty}$ on $\hat \C = \C \cup \{\infty\}$. This suggests that the exponent $\alpha=1$ marks the transition between the Kac regime and a degenerate superexponential regime.

\subsection{Remarks on the main results and open questions}

The starting point of our analysis is an explicit description of the joint distribution of the random log-concave sequence $(a_0,a_1,\ldots,a_n)$, or equivalently of the random convex sequence $(\xi_0,\xi_1,\ldots,\xi_n)$ defined by $\xi_i=-\log a_i$. In Lemma~\ref{lem:rep}, we show that the joint law of $(\xi_0,\xi_1,\ldots,\xi_n)$ can be represented as a mixture of linear combinations of i.i.d.\ exponential random variables. Theorem~\ref{thm:uniform} then follows from a direct application of the universality result of Hughes and Nikeghbali \cite{hughes2008zeros}. For the beta model, we use the concentration of exponential random variables around their means to approximate the polynomial by
\[
P_n(z)\approx \sum_{i=0}^n \exp\big(-n\theta_n(i/n)+\text{random fluctuation}\big) z^i,
\]
where $\theta_n:[0,1]\to\mathbb R$ is a sequence of convex functions converging to a limiting convex function $\theta$, and the random fluctuations are of order $\sqrt n$.

Heuristically, one may guess the form of $\theta$ as follows.  Before the conditioning, $-(1/n)\log b_i$ are i.i.d.\ exponentially distributed. Symmetry and concentration arguments yield that, after conditioning, the index of the largest coefficient is concentrated near $n/2$. The large deviation principle drives us to expect that $-(1/n)\log b_{\lfloor n (t  +1/2)\rfloor}$ will follow a curve $\tilde\theta(t), \  t \in [-1/2,1/2]$, that maximizes the local density, subject to the concavity constraint. This will be the solution to a differential equation $\tilde\theta''(t)= \frac{2}{(1/2-|t|)^2}$. Subject to the constraints $\tilde\theta(0)=\tilde\theta'(0)=0$, this yields $\tilde\theta(t)=-2\log(1-2|t|)-4|t|$, from which we recover  the limiting profile $\theta(s)=\tilde\theta(s - 1/2), s \in [0, 1].$

In the random polynomial literature, root distributions for polynomials of the form $\sum_{i=0}^n \eps_i f_{n,i} z^i$ have been studied by Kabluchko and Zaporozhets \cite{kabluchko2014asymptotic} and by Bloom and Dauvergne \cite{bloom19}. Here the $\eps_i$ are nondegenerate i.i.d.\ random variables satisfying a log-moment condition, and the deterministic coefficients $f_{n,i}$ have an asymptotic profile  of the form
$\lim_{n\to\infty}\big||f_{n,i}|^{1/n}-f(i/n)\big|=0$
for some positive continuous function $f$.
The heuristic here is that $\mu(\{z\in \C\ :\ |z|\le r\})$, the proportion of roots up to radius $r$, is roughly the power of the largest term in the polynomial up for $|z|=r$.
This heuristic is valid when randomness eliminates non-trivial phase cancellations. Showing this is non-trivial, and more so in our setting where the fluctuations of the coefficients are correlated, rather than independent, due to the dependence introduced by the convexity constraint. While as in \cite{kabluchko2014asymptotic,bloom19}, we prove convergence of the root distribution by establishing convergence of the logarithmic potentials, the argument here is substantially more involved. The main additional difficulty arises in the lower bound, which requires an anti-concentration argument involving those dependent random fluctuations.

We conclude by highlighting an open question for future research. Indeed, although every real-rooted polynomial with nonnegative coefficients is log-concave, the converse fails in general, and only a much stronger form of log-concavity guarantees real-rootedness. Nevertheless, despite this failure at the deterministic level, it is natural to ask 
\begin{problem}
Does a random log-concave polynomial typically have more real zeros than a Kac polynomial?
\end{problem}
For instance, in the uniform model, could the number of real zeros grow faster than in the Kac polynomial setting, where it is typically of logarithmic order (see \cite{nguyen2016number} and the references therein)? More specifically, could it be at least $(\log n)^{1+\delta}$ for some $\delta>0$?

In the beta model, since $P_n$ appears to be highly oscillatory on the real line, it is plausible that the number of real zeros is of order $\sqrt n$. Here $\sqrt n$ is a natural benchmark: for bounded-coefficient polynomials with $|a_j|\le 1$ for all $j$ and $|a_0|=|a_n|=1$, a classical result \cite{borwein2002littlewood} gives an $O(\sqrt n)$ upper bound for the number of real zeros and the bound is known to be tight for certain Littlewood polynomials \cite{jacob2025newman}. Although these results do not apply directly to the beta model, since we do not have endpoint normalization, they suggest that proving $\Theta(\sqrt n)$ real zeros would place the beta model at the natural extremal scale for real roots.

It would also be interesting to investigate the microscopic root statistics of the uniform and beta models, including correlation functions and local repulsion. \\

\noindent \textbf{Outline of subsequent sections.} 
The rest of the paper is organized as follows. In Section 2 we derive a convenient representation of the random convex sequence, in Section 3 we prove Theorem~\ref{thm:uniform}, and in Section 4 we prove Theorem~\ref{thm:main_beta}.

\section{Random convex sequence}
In this section, we give a more convenient description of the joint distribution of the random convex sequence $ \xi_0, \xi_1, \ldots, \xi_n$ defined in \eqref{def:random_convex}, expressed in terms of independent exponential random variables.

Let $X_0, X_1, \ldots, X_n$ be i.i.d.\ $\mathrm{exp}(1)$ random variables.
Define
\[ Y_i := \Delta_i X =  X_i - X_{i-1}, \ \ 1 \le i \le n, \ \  \ Z_i  :=\Delta_i Y =   Y_i - Y_{i-1} =  \Delta^2_i X, \ \ 2 \le i \le n. \] 
Recall that $ \mathcal{C} =\{ Z_i  > 0,  \ \ 2 \le i \le n \}.$ 
 %\[  ( \xi_ 0, \xi_1, \xi_2, \ldots, \xi_n) \stackrel{d}{=} (X_0, X_1, \ldots, X_n ) \big \vert \mathcal{C}.\]
Denote
\[
I=\min\{ i\in\{0,1,\dots,n-1\}: Y_{i+1}>0\},
\]
with the convention that $I=n$ if all $Y_1,\dots,Y_n\le 0$. 
Note that the conditional distribution of $X_0, \ldots, X_n$ given $\mathcal{C}$ can be expressed as the following mixture 
\[ \prob \big( (X_0, \ldots, X_k) \in \cS\ |\ \mathcal{C} \big) = \sum_{i=0}^n \prob \big( (X_0, \ldots, X_k) \in \cS\ |\ \mathcal{C} \cap \{ I = i\} \big)\prob(I = i\ |\  \mathcal{C}). \]

\begin{lem}\label{lem:rep}
Let $\{E_j\}_{j=-n}^{n}$ be i.i.d.\ $\exp(1)$ random variables,
and let $R$ be an independent random variable whose probability mass function is given by 
\begin{equation} \label{eq:R_pmf}
 \prob(R=i) = \frac{n+2}{n+1}\cdot\frac{\binom{n+1}{i}\binom{n+1}{i+1}}{\binom{2n+2}{ n+1}}, \quad i = 0, 1, 2, \ldots, n. 
\end{equation}
Writing $T_r = r(r+1)/2$, denote
\begin{align*}
W_{i-k} &= 
\frac{1}{n+1}E_0 +\sum_{m=1}^{k}\frac{k-m+1}{T_{i-m+1}} E_{-m}, & 0\le k\le i, %\label{eq:ind_x_below_express}
\\
W_{i+k} &= \frac{1}{n+1}E_0 +\sum_{m=1}^{k}\frac{k-m+1}{T_{n-i -m+1}} E_{m}, & 1\le k\le n-i.
%\label{eq:ind_x_above_express}
\end{align*}
Then
\begin{align*}(W_{0},\dots, W_{n})&\overset{\mathrm{d}}= (X_0, X_1, \dots, X_n)\ |\ \mathcal{C}, I=i.\\
R&\overset{\mathrm{d}}=I\ |\ \mathcal C.
\end{align*}
\end{lem}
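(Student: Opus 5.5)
The plan is a direct change of variables on the event $\mathcal C\cap\{I=i\}$. The joint density of $(X_0,\dots,X_n)$ is $e^{-\sum_j x_j}$ on $\R_+^{n+1}$. On $\mathcal C\cap\{I=i\}$ the sequence is convex and attains its minimum at index $i$: we have $Y_1,\dots,Y_i\le 0<Y_{i+1}$, where the ties $Y_j=0$ are null events and can be ignored.

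\textbf{Step 1: new coordinates.} For fixed $i$, parametrize the sequence by the following variables:
\begin{itemize}
\item the minimum value $\mu_0=x_i$;
\item on the left, $e_1=x_{i-1}-x_i$ and $e_m=(x_{i-m}-x_{i-m+1})-(x_{i-m+1}-x_{i-m+2})$ for $2\le m\le i$;
\item on the right, $e'_1=x_{i+1}-x_i$ and $e'_m$, the successive increments of the slopes, for $2\le m\le n-i$.
\end{itemize}
Summing the increments gives
\[
x_{i-k}=\mu_0+\sum_{m=1}^k (k-m+1)e_m,\qquad x_{i+k}=\mu_0+\sum_{m=1}^k (k-m+1)e'_m .
\]
This linear map is triangular with unit diagonal (up to ordering of coordinates), so its Jacobian is $1$. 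Under this map, the event $\mathcal C\cap\{I=i\}\cap\{x_j\ge0\ \forall j\}$ is exactly the orthant in which all of $\mu_0,e_m,e'_m$ are $\ge 0$. Indeed, positivity of every $x_j$ follows from $\mu_0\ge0$ once the $e$'s are nonnegative. The convexity conditions away from index $i$ are exactly $e_m,e'_m\ge0$ for $m\ge2$. The one convexity condition at the junction reads $\Delta^2_{i+1}x=e_1+e'_1\ge0$, which is automatic. Finally, $I=i$ is $e_1\ge 0$ (for $i\ge1$) together with $e'_1>0$ (for $i\le n-1$).

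\textbf{Step 2: factorize the density.} In the new coordinates,
\[
\sum_j x_j=(n+1)\mu_0+\sum_{m=1}^{i}T_{i-m+1}\,e_m+\sum_{m=1}^{n-i}T_{n-i-m+1}\,e'_m,
\]
since $\sum_{k=m}^{i}(k-m+1)=T_{i-m+1}$, and similarly on the right. The conditional density is therefore a product of independent exponential densities, with rates $n+1$, $T_{i-m+1}$ and $T_{n-i-m+1}$. Writing each variable as its rate inverse times a standard exponential, namely $\mu_0=E_0/(n+1)$, $e_m=E_{-m}/T_{i-m+1}$ and $e'_m=E_m/T_{n-i-m+1}$, gives precisely the formulas for $W_{i\mp k}$. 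This proves the first identity in distribution.

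\textbf{Step 3: law of $I$.} The same integral also gives
\[
\prob(\mathcal C,I=i)=\frac{1}{(n+1)\prod_{r=1}^{i}T_r\prod_{r=1}^{n-i}T_r}.
\]
Using $\prod_{r=1}^kT_r=k!(k+1)!/2^k$, this equals
\[
\frac{2^n}{(n+1)\,i!\,(i+1)!\,(n-i)!\,(n-i+1)!}\;\propto\;\binom{n+1}{i}\binom{n+1}{i+1}.
\]
To get the exact constant, I will normalize with Vandermonde's identity:
\[
\sum_i\binom{n+1}{i}\binom{n+1}{n-i}=\binom{2n+2}{n}=\frac{n+1}{n+2}\binom{2n+2}{n+1},
\]
which yields \eqref{eq:R_pmf}.

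The main care point is Step 1. One must verify that the pieces of the constraint set, namely positivity of the $x_j$, convexity, and the defining inequalities of $I$, become exactly the positive orthant in the new coordinates. This includes the edge cases $i=0$ and $i=n$, where one side of the parametrization is empty. Once that is settled, the remaining steps are routine bookkeeping.
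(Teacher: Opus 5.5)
Your proof is correct and follows essentially the same route as the paper. Both arguments re-center at the minimizing index $i$ and pass, via a unit-Jacobian linear map, to the minimum value together with the one-sided first and second differences. Both then factor $e^{-\sum_j x_j}$ into independent exponentials with rates $n+1$, $T_{i-m+1}$ and $T_{n-i-m+1}$, and read off $\prob(\mathcal C, I=i)\propto \binom{n+1}{i}\binom{n+1}{i+1}$. Your Vandermonde step also justifies the normalizing identity $\sum_{r}\binom{n+1}{r}\binom{n+1}{r+1}=\frac{n+1}{n+2}\binom{2n+2}{n+1}$, which the paper only asserts.
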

\begin{rem}\label{rem:W-convex}
From the representation in Lemma~\ref{eq:R_pmf}, it follows that for $1\le k\le R$,
\[
W_{R-k}-W_{R-(k-1)}=\sum_{m=1}^{k}\frac{1}{T_{R-m+1}} E_{-m}>0,
\quad
W_{R-k}-2W_{R-(k-1)}+W_{R-(k-2)}=\frac{1}{T_{R-k+1}} E_{-k}>0,
\]
and for $1\le k\le n-R$,
\[
W_{R+k}-W_{R+(k-1)}=\sum_{m=1}^{k}\frac{1}{T_{n-R-m+1}} E_{m}>0,
\quad
W_{R+k}-2W_{R+(k-1)}+W_{R+(k-2)}=\frac{1}{T_{n-R-k+1}} E_{k}>0.
\]
In particular, $(W_k)$ is strictly decreasing on $\{0,\dots,R\}$ and strictly increasing on $\{R,\dots,n\}$, so $W_R$
is the unique global minimum, and the profile is $V$-shaped around $R$. Moreover, $(W_k)$ is strictly convex on each side. Equivalently, $(e^{-W_k})_{k=0}^n$ is strictly log-concave.
\end{rem}

\begin{rem}
Define the Catalan and Narayana numbers as 
 $C_m=\frac{1}{m+1}\binom{2m}{m}$  and $N(m,k)=\frac{1}{m}\binom{m}{k}\binom{m}{k-1}$. These count the number of Dyck paths of length $m$ and the number of Dyck paths of length $m$ with exactly $k$ peaks, respectively. Then 
 \[ \prob(R = i) = \frac{N(n+1,i+1)}{C_{n+1}},  \ \ 0 \le i \le n. \]
\end{rem}

\begin{proof}
Via a change of variables through $X_k=X_0+\sum_{t=1}^kY_t$ 
we compute the joint density of $(X_0,Y_1,\dots,Y_n)$ as
\[
f_{(X_0,Y_1,\dots,Y_n)}(x_0, y_1, \ldots, y_n) = 
\exp \Big(-(n+1)x_0-\sum_{k=1}^n (n+1-k)y_k\Big) \mathbf{1}_{\{ x_0 + \sum_{t=1}^k y_t \ge 0 \text{ for all } k\ge 0 \} }.
\]
On $\mathcal C\cap\{I=i\}$ we have 
\[
y_1<\cdots<y_i<0<y_{i+1}<\cdots<y_n.
\]
writing $V_j := \begin{cases}
Y_{i+j+1} & j<0,\\
Y_{i+j} & j>0,
\end{cases}$
the join density of $(V_{-i},\dots,V_{-1},X_i,V_{1},\dots,V_{n-i})$ satisfies
\begin{multline*}
  f_{(V_{-i},\dots,V_{-1},X_i,V_{1},\dots,V_{n-i})}(v_{-i}, \ldots, v_{-1},x_i,v_{1},\dots,v_{n-i})=\\
\exp \Big(-(n+1)x_i-\sum_{k=-1}^{-i} (-i-1-k)v_k-\sum_{k=1}^{n-i} (n-i+1-k)v_k\Big)\mathbf{1}_{\left\{\substack{ x_i + \sum_{t=1}^k v_t \ge 0 \text{ for all } k\in [0,n-i], \\\hspace{6pt}x_i + \sum_{t=-1}^{k} v_{t} \ge 0 \text{ for all } k\in [-i,-1] }\right\} }.
\end{multline*}
on $\mathcal{C},\{I=i\}$, this implies 
\begin{multline*}
\mathbf{1} (C\cap\{I=i\})f(v_{-i}, \ldots, v_{-1},x_i,v_{1},\dots,v_{n-i})\propto\\
\exp \Big(-(n+1)x_i-\sum_{k=-1}^{-i} (-i-1-k)v_k-\sum_{k=1}^{n-i} (n-i+1-k)v_k\Big)\mathbf{1}_{\left\{\substack{x_i,v_1,-v_{-1}\ge 0,\\v_k\le v_{k+1} \text{ for all } k\in [-i,n-i-1]}\right\}},
\end{multline*}
as required. 

Next, write $U_k=\begin{cases}
    V_k\cdot \mathrm{sgn}(k) &\text{if }k\in\{\pm1\}\\
    V_k-V_{k-1}&\text{if }k\notin\{\pm1\}
\end{cases}$. 
on $\mathcal{C},\{I=i\}$, we can express $X_0, X_1, \ldots, X_n$ as the linear combinations of $X_i$ and $U_{-i}, \ldots, U_{-1}, U_{1}, \ldots, U_{n-i}$ via the following identities. 
\begin{align}
X_{i-j} &= X_i+\sum_{m=1}^{j}(j-m+1) U_{-m}, && 0\le k\le i, \label{eq: ind_x_below_express}\\
X_{i+j} &= X_i+\sum_{m=1}^{j}(j-m+1)\,U_{m}, && 0\le k\le n-i. \label{eq: ind_x_above_express}
\end{align}
Writing $T_r = r(r+1)/2$, this implies
\begin{align}%\label{eq:sum_x_express}
\sum_{j=0}^n X_j
= (n+1)X_i + \sum_{m=-1}^{-i} T_{1+i-m} U_m
+ \sum_{m=1}^{n-i} T_{n-i+m+1} U_m.
\end{align}
Using this we compute the joint density of $(X_i,U_{-i}, \ldots, U_{n-i})$, 
up to a constant, exploiting the fact that the change of variables has a constant Jacobian,
\begin{multline}
f_{(X_i,U_{-i}, \ldots, U_{-1},U_{1},\dots, U_{n-i})}(x_i,u_{-i}, \ldots, u_{-1},u_{1},\dots, u_{n-i})\propto\\
   \exp \Big( - (n+1)x_i  
- \sum_{m-1}^i T_{1+i-m} u_{-m}
 -  \sum_{m=1}^{n-i} T_{n-m-i+1} u_m \Big) \cdot \mathbf{1}_{\{x_k > 0 \text { for all }  k\} }.  \label{eq: main obs on U}
\end{multline}
This we use twice to obtain the statement of the lemma. Firstly, we obtain that 
\begin{align*}
\prob(I=i)
&\propto\int_{(0,\infty)}\exp\left(-\frac{x_i}{n+1}\right)dx_i\int_{(0,\infty)^i}\exp\!\Big(-\sum_{j=1}^{i}T_j u_{-i+j-1}\Big)\,d(u_{-i},\dots, u_{-1})
\\&\phantom{=}\;\cdot\;
\int_{(0,\infty)^{\,n-i}}\exp\!\Big(-\sum_{j=1}^{n-i}T_j u_{n-i-j+1}\Big)\,d(u_1,\dots, u_{n-i})\\
&\propto\prod_{j=1}^{i}\frac{1}{T_j}\;\cdot\;\prod_{j=1}^{n-i}\frac{1}{T_j},
\end{align*}
Where for the last step we integrate out $x_i$. We have $\prod_{j=1}^{r}T_j=2^{-r}r!(r+1)!$, so
\[
w_i=\frac{2^n}{\,i!(i+1)!\,(n-i)!(n-i+1)!}.
\]
Finally, we observe that 
\[
\prob(R=i)=\frac{w_i}{\sum_{r=0}^{n} w_r}
=\frac{\frac{1}{i!(i+1)!\,(n-i)!(n-i+1)!}}
{\sum_{r=0}^{n}\frac{1}{r!(r+1)!\,(n-r)!(n-r+1)!}}.
\]
Multiply numerator and denominator by $(n+1)!^2$ and use the identity
\[
\sum_{r=0}^{n}\binom{n+1}{r}\binom{n+1}{r+1}
=\frac{n+1}{n+2}\binom{2n+2}{n+1}.
\]
This yields   \eqref{eq:R_pmf}.

In addition, in terms of \eqref{eq: main obs on U}, the event $\mathcal{C}\cap \{I=i\}$ becomes $\{x_i>0\}, \{u_m>0\}_{m\in \{-i,\dots, -1,1,\dots, n-i\}}$. Conditioned on these the formulae
\eqref{eq: ind_x_below_express}–\eqref{eq: ind_x_above_express} yield $x_k>0$ for all $k$. Therefore, conditioning on $\mathcal C\cap\{I=i\}$ we have, 
\[
X_i\sim \mathrm{exp}(n+1),\qquad
U_m\sim \mathrm{exp}(T_{\,1+i-m}),\ \ -i\le m\le 1, \qquad
U_m\sim \mathrm{exp}(T_{\,n-m-i+1}),\ \ 1\le m\le n-i,
\]
and all the listed variables are mutually independent. 
Noting that $\mathrm{exp}(\lambda)$ has the same distribution as $\lambda^{-1}\mathrm{exp}(1), $ for any $\lambda>0,$ we express 
for $0\le k\le R-1$,
\[
X_k
= \frac{1}{n+1} X_i + \sum_{m=1}^{k}
\frac{m - k -1}{ T_{m-1}} U_{-m},
\]
and for $R \le k \le n$,
\[
X_k
= \frac{1}{n+1}X_i +\sum_{m=R+1}^{k}\frac{k - m +1}{ T_{n-m+1}} U_m.
\] 
The lemma follows.
\end{proof}

We also require the following concentration 
bound on $R$.
 \begin{lem}\label{lem:R_conc}
     Let $R$ be as defined in \eqref{eq:R_pmf}. Then there exist constants $C, c>0$ such that for all $n \ge 1$ and $t > 0,$
     \[ \prob( |R - n/2| \ge t ) \le C n e^{-c t^2/n}.\]
 \end{lem}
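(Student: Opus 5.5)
We bound the ratio of each probability to the central one, directly from the explicit mass function \eqref{eq:R_pmf}. Write $p_i=\prob(R=i)$ and set $N=n+1$, so that $p_i\propto \binom{N}{i}\binom{N}{i+1}$. Since $\sum_i p_i=1$ and $R$ takes only $n+1$ values, it suffices to show a pointwise ratio bound
\[
\frac{p_i}{p_{i^*}}\le C' e^{-c|i-n/2|^2/n},
\]
where $i^*=\lfloor n/2\rfloor$ (or any mode near $n/2$). Indeed, $p_{i^*}\le 1$, and summing over the at most $n+1$ indices with $|i-n/2|\ge t$ then gives $\prob(|R-n/2|\ge t)\le C'(n+1)e^{-ct^2/n}$, which is of the claimed form.

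To obtain the ratio bound, we use the symmetry $p_i=p_{n-i}$, which follows from $\binom{N}{i}\binom{N}{i+1}=\binom{N}{N-i}\binom{N}{N-i-1}$ together with $N-i-1=n-i$. So it is enough to treat $i\le n/2$. Consider the successive ratios
\[
\frac{p_{i-1}}{p_i}=\frac{\binom{N}{i-1}\binom{N}{i}}{\binom{N}{i}\binom{N}{i+1}}=\frac{i}{N-i+1}\cdot\frac{i+1}{N-i}.
\]
For $i\le n/2$, write $i=n/2-s$ with $s\ge 0$. Each factor has the form $\frac{m/2-a}{m/2+b}$ with $a,b\ge s-O(1)$, so it is at most $\frac{1-2s/N+O(1/N)}{1+2s/N}\le \exp(-2s/N+O(1/N))$. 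Hence
\[
\frac{p_{i-1}}{p_i}\le \exp\bigl(-4s/N + O(1/N)\bigr).
\]
Telescoping from $i^*$ down to $i$ and summing $s'$ from $O(1)$ up to $s$ gives
\[
\log\frac{p_i}{p_{i^*}}\le -\frac{2s^2}{N}+O\Bigl(\frac{s}{N}\Bigr)+O(1)\le -\frac{c s^2}{n}+C.
\]
Here the $O(s/N)$ term is at most $O(1)$ because $s\le n/2$. This is the desired bound with $s=|i-n/2|$.

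The only delicate step is keeping the $O(1)$ offsets uniform: the mode may be at $\lfloor n/2\rfloor$ or $\lceil n/2\rceil$, and the factors may slightly exceed $1$ near the center. We handle this by letting each factor be at most $\exp(-4s/N+C/N)$; the at most $n$ factors with small $s$ then contribute at most $e^{C}$ in total, and we absorb this into $C'$. For $t\le \sqrt n$ the bound holds trivially once $C\ge e^{c}$, so no small-$t$ case analysis is needed.

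As a cross-check on the constants, one could instead apply Stirling's formula to $\binom{N}{i}\approx 2^N e^{-2(i-N/2)^2/N}$. This gives $p_i\approx \frac{1}{\sqrt{\pi N}}e^{-4(i-n/2)^2/N}$, consistent with the $n$ prefactor in the statement (which is in fact wasteful). The elementary ratio argument above avoids any error terms from Stirling's formula.
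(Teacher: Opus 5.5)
Your argument is correct, but it takes a different route from the paper's.

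\textbf{The paper's route.} The paper observes that if $X,Y$ are i.i.d.\ $\mathrm{Bin}(n+1,1/2)$, then $\prob(X=i,\,Y=i+1)\propto\binom{n+1}{i}\binom{n+1}{i+1}$, so $R\stackrel{d}{=}(X\mid X=Y-1)$. It then bounds $\prob(|R-n/2|\ge t)\le \prob(|X-n/2|\ge t)/\prob(X=Y-1)$. The local CLT gives $\prob(X=Y-1)=\Omega(n^{-1})$, and standard binomial concentration finishes the proof. There the prefactor $n$ is the price of conditioning on an event of polynomially small probability. In your proof it is the price of a union bound over the $n+1$ atoms.

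\textbf{Your route.} You work directly with the mass function, using the symmetry $p_i=p_{n-i}$, the exact ratio $p_{i-1}/p_i=\frac{i(i+1)}{(N-i+1)(N-i)}$, and telescoping. This is fully elementary and self-contained: no local limit theorem or external concentration inequality is needed. It also proves something stronger than required, namely a pointwise Gaussian bound $p_i\le C'e^{-c(i-n/2)^2/n}$ on each atom, which is local rather than merely tail information.

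\textbf{Trade-off.} The paper's argument is shorter and makes the combinatorial origin of the law of $R$ transparent. The cost is that it outsources the estimates, and it needs slightly more care with the half-integer shift between $n/2$ and the binomial mean $(n+1)/2$.

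\textbf{Two minor remarks.}
\begin{enumerate}
\item For $i\le n/2$ both factors $\frac{i}{N-i+1}$ and $\frac{i+1}{N-i}$ are already at most $1$, since $2i+1\le N$. So your concern about factors slightly exceeding $1$ near the center is unnecessary, though handling it does no harm.
\item In your Stirling cross-check the normalization should be $\frac{2}{\sqrt{\pi N}}$ rather than $\frac{1}{\sqrt{\pi N}}$. This does not affect the proof.
\end{enumerate}
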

\begin{proof}
    Note that if $X$ and $Y$ are i.i.d.\ $\mathrm{Bin}(n+1, 1/2),$ then 
    \[ R \stackrel{d}{=} (X \ \big |\ X = Y-1).\] By local central limit theorem, $\prob(X = Y-1) \ge \prob(X = \lfloor{n/2} \rfloor) \prob(Y = \lfloor{n/2} \rfloor -1) = \Omega(n^{-1}).$ The lemma now follows from the standard concentration of the binomial distribution. 
\end{proof}

\section{Convergence of the empirical root measure for the uniform model}
In this section we prove Theorem~\ref{thm:uniform}. The argument is a direct consequence of a result of Hughes and Nikeghbali \cite{hughes2008zeros}, which provides a sufficient condition on the coefficients of a deterministic polynomial under which its roots equidistribute on the unit circle, that is,  their moduli concentrate at $1$ and their arguments become asymptotically uniform.
\begin{thm}[\cite{hughes2008zeros}] \label{thm:uniform_dist_root_criteria}
Let $Q_n(z) =\sum_{i=0}^n a_{n, i} z^i $ be a sequence of deterministic polynomials whose coefficients satisfy 
\begin{equation}\label{uniform_dist_root_criteria}
    \frac{1}{n} \log \left ( \frac{\sum_{i=0}^n |a_{n, i}|}{\sqrt{|a_{n, 0}||a_{n, n}|}} \right) \to 0, \quad \text{ as } n \to \infty.
\end{equation}
    Then the empirical distribution of the roots of $Q_n$ converges weakly to the uniform distribution on the unit circle.
\end{thm}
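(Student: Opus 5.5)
The plan is to split the claim into a radial statement (the moduli of the roots concentrate at $1$) and an angular statement (the arguments equidistribute), and then combine them. Throughout, the quantity
\[
\delta_n:=\frac1n\log\Big(\sum_{i=0}^n|a_{n,i}|\Big/\sqrt{|a_{n,0}||a_{n,n}|}\Big)
\]
tends to $0$ by \eqref{uniform_dist_root_criteria}; implicitly $a_{n,0}a_{n,n}\neq 0$. Write $\zeta_1,\dots,\zeta_n$ for the roots of $Q_n$.

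\textbf{Radial part.} I would apply Jensen's formula to $Q_n$ on the unit circle in two ways:
\[
\frac1{2\pi}\int_0^{2\pi}\log|Q_n(e^{i\theta})|\,d\theta=\log|a_{n,0}|+\sum_{|\zeta_k|<1}\log\frac1{|\zeta_k|} .
\]
The second way applies the same formula to the reversed polynomial $z^nQ_n(1/z)$, giving
\[
\frac1{2\pi}\int_0^{2\pi}\log|Q_n(e^{i\theta})|\,d\theta=\log|a_{n,n}|+\sum_{|\zeta_k|>1}\log|\zeta_k| .
\]
The left-hand side is at most $\log\sum_i|a_{n,i}|$, since $|Q_n|\le\sum_i|a_{n,i}|$ on $\mathbb S^1$. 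Adding the two identities gives
\[
\frac1n\sum_{k=1}^n\big|\log|\zeta_k|\big|\le 2\delta_n\to0 .
\]
Markov's inequality then shows that for every $\eps>0$ the fraction of roots with $\big|\log|\zeta_k|\big|>\eps$ is at most $2\delta_n/\eps\to0$. In particular the empirical measures are tight in $\C\setminus\{0\}$ and concentrate on any annulus around $\mathbb S^1$.

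\textbf{Angular part.} For this step I would invoke the Erd\H{o}s--Tur\'an inequality, in the sharpened form of Ganelius (or Hughes--Nikeghbali's own version). It states that for every arc $0\le\alpha<\beta\le 2\pi$,
\[
\Big|\frac{\#\{k:\arg\zeta_k\in[\alpha,\beta)\}}{n}-\frac{\beta-\alpha}{2\pi}\Big|\le C\sqrt{\delta_n}
\]
with an absolute constant $C$. This is the main obstacle if one wants a self-contained argument. I would follow the classical route. One first bounds the Fourier coefficients $\frac1n\sum_k\zeta_k^m/|\zeta_k|^m$ of the angular measure via the logarithmic potential: $\log|Q_n|$ on circles is controlled by $n\delta_n$ from above, and from below only on average by Jensen. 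One then feeds these bounds into a smoothed Fej\'er-kernel discrepancy estimate, optimizing the truncation level of the kernel to obtain the square-root dependence. For this proposal I would take the inequality as known.

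\textbf{Combining.} Finally, fix a bounded continuous test function $g$ on $\C$. On the annulus $A_\eps=\{e^{-\eps}\le|z|\le e^{\eps}\}$, uniform continuity of $g$ lets us replace $g(\zeta_k)$ by $g(\zeta_k/|\zeta_k|)$ up to an error $\omega_g(\eps)$. By the angular estimate, the average of $g(\zeta_k/|\zeta_k|)$ over all roots converges to $\int g\,d\mathsf U(\mathbb S^1)$; here one approximates $g|_{\mathbb S^1}$ by step functions on arcs. The roots outside $A_\eps$ contribute at most $2\|g\|_\infty\cdot 2\delta_n/\eps$ by the radial part. Letting $n\to\infty$ and then $\eps\to0$ yields weak convergence of $n^{-1}\sum_k\delta_{\zeta_k}$ to $\mathsf U(\mathbb S^1)$.
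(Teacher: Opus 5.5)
Your argument is correct. It is essentially the proof in the cited source: the paper does not prove this statement but imports it from Hughes--Nikeghbali, whose argument combines the Jensen bound $\frac1n\sum_k\bigl|\log|\zeta_k|\bigr|\le 2\delta_n$ (applied to $Q_n$ and its reversal) with the Erd\H{o}s--Tur\'an inequality for the arguments, exactly as you do.

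If you want the angular step to be self-contained, note that weak convergence needs no rate. Since $\log\sum_i|a_{n,i}|-\log|Q_n(e^{i\theta})|\ge 0$ has mean at most $n\delta_n$, every nonzero Fourier coefficient of $\log|Q_n(e^{i\theta})|$ has modulus at most $n\delta_n$. Combined with your radial bound, this gives $\bigl|\frac1n\sum_k(\zeta_k/|\zeta_k|)^m\bigr|\le 4|m|\delta_n\to0$ for each fixed $m\neq 0$. By density of trigonometric polynomials in $C(\mathbb S^1)$, this already yields weak convergence of the angular measures, with no need for the Fej\'er-kernel discrepancy estimate.
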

We apply the above theorem to the random polynomial $P_n(z) = \sum_{k=0}^n e^{-W_k} z^k,$ where $(W_k)_{k=0}^n$ is as defined in Lemma~\ref{lem:rep}. Since $W_R=\min_k W_k$ (see Remark~\ref{rem:W-convex}), we have
\begin{align}\label{eq:hughes_quantity}
   0 \le   \frac{1}{n} \log \left ( \frac{\sum_{k=0}^n e^{-W_k}}{e^{-(W_0+ W_n)/2}} \right) \le  \frac{\log(n+1)}{n} + \frac{1}{n} \left ( \frac{W_0+W_n}{2}-W_R\right).
\end{align}

Let $M_n=\max_{1\le |m|\le n} E_m$. Then 
\begin{align*}
0 \le \frac{W_0+W_n}{2}-W_R
&=\frac{1}{2}\sum_{m=1}^{R}\frac{2}{R-m+2}\,E_{-m}
+\frac{1}{2}\sum_{m=1}^{n-R}\frac{2}{n-R-m+2}\,E_{m}\\
&\le M_n\Big(\sum_{j=2}^{R+1}\frac{1}{j}+\sum_{j=2}^{n-R+1}\frac{1}{j}\Big)
\le 2M_n\big(1+\log(n+1)\big).
\end{align*}
By Borel-Cantelli, $M_n = O(\log n)$ almost surely, which implies 
$ n^{-1} \big( \frac{W_0+W_n}{2}-W_R \big) \to 0$ almost surely as $n \to \infty.$ Hence, \eqref{eq:hughes_quantity} converges to $0$ almost surely.
Theorem~\ref{thm:uniform} now follows immediately from \ref{thm:uniform_dist_root_criteria}.

\section{Convergence of the empirical root measure for the beta model}
By the representation of the beta model in Section~\ref{sec:beta} and Lemma~\ref{lem:rep}, we may realize a random polynomial drawn from the beta model as
\[
P_n(z) \stackrel{d}{=} \sum_{k=0}^n e^{-nW_k} z^k,
\]
where $(W_0,\dots,W_n)$ has the joint distribution given by Lemma~\ref{lem:rep}.

Given $|t| < 1/2$ and $z \in \mathbb{C} \setminus \{0\}$, denote
\begin{equation} \label{def:psi}
\psi(t) = -4|t| - 2\log(1-2|t|), \qquad \psi(t; z) = \psi(t) - (1/2 + t) \log |z|, \ \  |t|<1/2. 
\end{equation}
and set $G(z) := \sup_{ t \in (-1/2, 1/2) }  - \psi(t; z).$ A simple computation yields
\begin{equation}\label{def:V}
G(z) =
\begin{cases}
2 \log \dfrac{4}{4 - \log |z|}, & 0 < |z| \le 1, \\
\log |z| + 2 \log \dfrac{4}{4 + \log |z|}, & |z| \ge 1,
\end{cases}
\end{equation}
It is easy to verify that $G(z)  = -2 \log| \log |z| | + O(1) $ as $z \to 0.$ Hence, we can extend the definition of $G$ to $\mathbb{C}$ by setting $G(0) = - \infty.$
Note that $G$ is a subharmonic function.

Theorem~\ref{thm:main_beta} follows from the following result, which establishes the convergence of the log-potential of $P_n$. 

\begin{thm}\label{thm:conv_log_potential}
Let $P_n(z)$ be a random log-concave polynomial of degree $n$ drawn from the beta model. As $n \to \infty,$ for any $z \in \mathbb{C} \setminus \{0\},$
  \begin{equation}\label{eq:conv_log_potential_main}
     \lim_{n \to \infty}  \frac{1}{n} \log |P_n(z)| \stackrel{p}{\to} G(z),   
  \end{equation}
  where the limiting potential $G$ is as defined in \eqref{def:V}.
  Moreover, for every $C> 1$ and $\eps>0$,
   \begin{equation}\label{eq:conv_log_potential_uniform}
   \lim_{n \to \infty}  \prob \left( \sup_{ C^{-1} \le |z| \le C} \Big( \frac{1}{n} \log |P_n(z)|  - G(z) \Big) \ge \eps  \right) = 0. 
   \end{equation}
\end{thm}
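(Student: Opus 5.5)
We work throughout with the realization $P_n(z)=\sum_{k=0}^n e^{-nW_k}z^k$ from Lemma~\ref{lem:rep}. The proof has three steps: a deterministic law of large numbers for the profile $W$, an upper bound on $\frac1n\log|P_n(z)|$ that is uniform on annuli, and a pointwise anti-concentration lower bound.

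\textbf{Step 1: the profile.} We first show that $\max_{0\le k\le n}\bigl|W_k-\tfrac14\psi(k/n-1/2)\bigr|\to0$ in probability, where the normalisation constant $1/4$ will be checked against \eqref{def:V}.

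By Lemma~\ref{lem:R_conc}, $|R-n/2|\le n^{2/3}$ with high probability. On this event, $W_{R+k}-W_R=\sum_{m=1}^k\frac{k-m+1}{T_{n-R-m+1}}E_m$.

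Its mean is a Riemann sum. Writing $m=xn$, $k=sn$ and $n-R\approx n/2$, we get
\[
\sum_m \frac{k-m}{(n/2-m)^2/2}\approx \int_0^s\frac{2(s-x)}{(1/2-x)^2}\,dx,
\]
which is a multiple of $\psi(s)$ up to the fixed constant.

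The centred fluctuation is a weighted sum of independent exponentials. Its weights are bounded by $O(n/(n-R-m)^2)$, so its variance is $O(1/n)$ away from the boundary. Bernstein's inequality together with a union bound over $k$ then gives uniform concentration on $k\le n-R-\delta n$.

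Near the endpoints we cannot expect concentration. There we only use the crude bound $W_k\ge W_R+(\text{positive terms})$, together with the fact that both the increments and $\psi$ blow up, so those coefficients are exponentially negligible.

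\textbf{Step 2: the upper bound \eqref{eq:conv_log_potential_uniform}.} This is the easy direction. We have
\[
|P_n(z)|\le (n+1)\max_k e^{-nW_k}|z|^k,
\]
so
\[
\frac1n\log|P_n(z)|\le \frac{\log(n+1)}n+\max_k\Bigl(-W_k+\tfrac kn\log|z|\Bigr).
\]
By Step~1 and continuity, the last term converges to $\sup_t -\psi(t;z)=G(z)$, uniformly for $C^{-1}\le|z|\le C$, since $\log|z|$ is bounded there. Near the endpoints we use $W_k\ge W_{\lfloor(1/2\pm(1/2-\delta))n\rfloor}$, which holds by monotonicity (Remark~\ref{rem:W-convex}); this disposes of the boundary without needing concentration there.

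\textbf{Step 3: the lower bound in \eqref{eq:conv_log_potential_main}.} This is the main obstacle. Fix $z$ and let $t^*$ be the maximiser of $-\psi(t;z)$, with $k^*=\lfloor n(1/2+t^*)\rfloor$.

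We want $|P_n(z)|\ge e^{n(G(z)-\eps)}$ with high probability. The danger is cancellation among the roughly $\sqrt n$ terms of near-maximal size, whose fluctuations are $O(\sqrt n)$ in the exponent and are correlated.

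The plan is to condition on all the exponentials except a single one, $E_{m_0}$, with $m_0$ chosen at macroscopic distance from $k^*-R$. Changing $E_{m_0}$ shifts $W_k$ for every $k$ beyond $m_0$ by the amount $\frac{k-m_0+1}{T}E_{m_0}$, which is linear in $k$ with slope of order $1/n$.

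We therefore write $P_n(z)=A+B(E_{m_0})$, where
\[
B=\sum_{k>R+m_0}e^{-nW_k^{(0)}}\,e^{-n(k-m_0+1)E_{m_0}/T}\,z^k .
\]
Since $n/T\asymp 1/n$, the map $E_{m_0}\mapsto B$ is equivalent to evaluating $B$ at the rescaled point $z\,e^{-cE_{m_0}}$, i.e.\ moving the radius by a factor $e^{-c\,u}$ with $u$ of order $1$. The polynomial $\tilde B(w)=\sum_k c_k w^k$ is then a fixed analytic function of $w$ in a disc, and $|A+B|$ is small only when $w$ lies near the zeros of $A+\tilde B(w)$.

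A Jensen/Cartan-type estimate controls this. On a segment of radii of length of order $1$, the set where $\log|A+\tilde B|\le \log\max|\cdot| - \eps n$ has measure at most $e^{-c\eps n/(\#\text{zeros})}$-type small, since the number of zeros is $O(n)$. Because $E_{m_0}$ has a bounded density, the probability of landing in this bad set is $o(1)$.

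It remains to check that $\max|A+\tilde B|$ over the segment is at least $e^{n(G(z)-\eps/2)}$. This holds by Step~1 and by maximum-term considerations at neighbouring radii, which use the continuity of $G$.

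If the cancellation via a single variable proves too delicate, we will instead use the two-dimensional smoothing obtained from $E_{m_0}$ and $E_0$ (the latter shifts all terms by a common real factor), or average over several independent $E_m$.
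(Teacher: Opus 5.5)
Your Steps~1--2 match the paper's upper bound. The paper cuts the bulk at distance $L_n=\lfloor(\log n)^2\rfloor$ from the ends and handles the edges with Lemma~\ref{lem:edge}; you cut at $\delta n$ and use monotonicity, which also works. One slip: the normalisation is $1$, not $1/4$. Your own Riemann integral gives $\int_0^s\frac{2(s-x)}{(1/2-x)^2}\,dx=-4s-2\log(1-2s)=\psi(s)$.

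The genuine gap is in Step~3, for three reasons.

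\begin{enumerate}
\item Resampling $E_{m_0}$ multiplies the $k$-th term ($k>R+m_0$) by $e^{-\lambda(k-R-m_0+1)}$, where $\lambda=nE_{m_0}/T_{n-R-m_0+1}\asymp E_{m_0}/n$. So $E_{m_0}\mapsto P_n(z)$ is a polynomial of degree $\asymp n$ in $e^{-\lambda}$. It is explored only along a radial segment of relative length $O(1/n)$, not of order one.

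\item For a degree-$d\asymp n$ polynomial, Remez/Cartan-type bounds only say that $\{|p|\le e^{-\eps n}\sup_I|p|\}$ has measure of order $|I|e^{-c\eps n/d}$. That is a fixed fraction of $I$. It is exactly your bound $e^{-c\eps n/\#\mathrm{zeros}}$ with $\#\mathrm{zeros}=O(n)$, and it does not tend to $0$. To get $o(1)$ you would need only $O(1)$ zeros within $O(1/n)$ of the segment, a local root count stronger than the theorem itself.

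\item The claim $\sup_{\text{segment}}|A+\tilde B|\ge e^{n(G(z)-\eps/2)}$ does not follow from maximum-term considerations. The largest term controls the $L^2$ mean over a full circle, not values on a short segment at fixed argument. For instance, $(w-w_0)^n$ with $|w_0|=1$ is exponentially small relative to its largest term on all of $D(w_0,1/2)$.
\end{enumerate}

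Your fallback via $E_0$ does not help either. $E_0$ enters every $W_k$ with the same weight $1/(n+1)$, so it only multiplies $P_n(z)$ by a positive scalar.

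The missing idea is a perturbation that moves \emph{only} the dominant coefficient. The paper takes the bulk minimiser $\upsilon$ of $\psi_n(\cdot;z)$ and replaces $F_\upsilon,F_{\upsilon+1},F_{\upsilon+2}$ by
\[
X=F_\upsilon+F_{\upsilon+1}+F_{\upsilon+2},\qquad Y=F_\upsilon-F_{\upsilon+2},\qquad Z=F_\upsilon-2F_{\upsilon+1}+F_{\upsilon+2}.
\]
Because $\ccW_{R+k}$ is a double cumulative sum of the $F_m$, each $\ccW_{R+k}$ with $k\ne\upsilon$ sees these variables only through $(k-\upsilon)X+Y$, or not at all. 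Meanwhile $\ccW_{R+\upsilon}$ contains $Z/6$. Hence $P_n(z)$ is a nonzero factor times $e^{-nZ/6}-Q$, where the factor and $Q$ are measurable with respect to the remaining variables.

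Conditionally on $(X,Y)$, $Z$ has density $\propto e^{-z/6}$ on an interval of length $\ell(X,Y)$. With high probability $\ell(X,Y)\ge n^{-3}$ (Lemma~\ref{lem:good_xy}), so this density is at most $Cn^3$. On the other hand, $\{|z|\le(\log n)^{-1/2}:\ |e^{-nz/6}-q|\le e^{-\kappa n}\}$ has Lebesgue measure at most $Cn^{-1}e^{-\kappa n+n/(6\sqrt{\log n})}$, uniformly in $q$.

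The free variable enters through a single exponential rather than a degree-$n$ polynomial. The bad set is therefore exponentially small and beats the polynomial density bound. Together with $|\ccW_{R+\upsilon}|\to0$ and $\psi_n(\upsilon/n;z)\to-G(z)$, this gives the lower bound.
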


This Theorem is the conjugation of an upper and lower bound of the following form. Fix $z\in\C\setminus\{0\}$ and $\eps>0$ and $C>1$. Then, as $n\to\infty$,
\begin{align}\label{eq:logpotential_ub}
\prob \left( \sup_{ C^{-1} \le |z| \le C} \Big( \frac{1}{n} \log |P_n(z)|  - G(z) \Big) \le \eps  \right) \to 1
\end{align}
and
\begin{align}\label{eq:logpotential_lb}
\prob\Big(\frac{1}{n}\log|P_n(z)| \ge G(z)-\eps\Big)\to 1.
\end{align}
The proof of \eqref{eq:logpotential_ub} is postponed to Section~\ref{sec: pf upper}, and that of \eqref{eq:logpotential_ub} -- to Section~\ref{sec: pf lower}.

\medskip
In the remainder of the section, we reduce Theorem~\ref{thm:main_beta} to Theorem~\ref{thm:conv_log_potential}.
To this end, we require the following analytic proposition, which is a slight variant of Theorems~2.4 and~4.1 in \cite{bloom19}. In \cite{bloom19}, the limiting function is assumed to be continuous on $\mathbb{C}$, while in our setting $G$ is continuous only on the punctured plane $\mathbb{C}\setminus\{0\}$. The proof of this version could be found in Appendix~\ref{sec:app root conv}.
\begin{prop}\label{prop:bd_puncture}
Let $p_n$ be (non-random) polynomials of degree $n$ and define
\[
\varphi_n(z)=\frac{1}{n}\log|p_n(z)|, \  z \in \mathbb{C}, \text{ and }
\quad
\mu_{p_n}=\frac{1}{2\pi}\Delta \varphi_n,
\]
where $\mu_{p_n}$ is understood in the sense of distributions. Equivalently, $\mu_{p_n}$ is the empirical measure of the roots of $p_n$.  Let $U: \mathbb C \to \mathbb{R} \cup \{ -\infty\} $ be subharmonic, assume that
$U$ is continuous on $\mathbb C\setminus\{0\}$, and set
$\mu=\frac{1}{2\pi}\Delta U$ (the distributional Laplacian), viewed as a Radon measure on $\mathbb C$.  

Suppose that the following hold.
\begin{enumerate}[label=\textup{(\roman*)}, ref=\textup{(\roman*)}]
\item The family $\{\varphi_n\}$ is locally bounded above on $\mathbb C$.\label{prop:bd_puncture:it1}
\item  For every $z\in\mathbb C\setminus\{0\}$,
\[
\limsup_{n\to\infty}\varphi_n(z)\le U(z).
\] \label{prop:bd_puncture:it2}
\item There exists a countable dense set
$\{z_i\}_{i\ge 1}\subset \mathbb C\setminus\{0\}$ such that
\[
\lim_{n\to\infty}\varphi_n(z_i)=U(z_i)\qquad\text{for all }i\ge 1.
\]
\label{prop:bd_puncture:it3}
\end{enumerate}
Then  for each $r>0$, 
    \begin{equation}\label{L1_loc_conv}
        \varphi_n \to U \qquad\text{in }L^1_{\mathrm{loc}}(E_r), 
 \end{equation}
where $D(0, r) = \{ z: |z| < r \}$ and  $E_r=\mathbb C\setminus \overline{D(0,r)}$.

Furthermore, assume that $\mu(\{0\})=0$ and $\mu(\mathbb{C}) =1$. Also, assume that 
\begin{equation}\label{eq:assump_no_accumulation_zero_origin}
    \lim_{r\downarrow 0}\ \limsup_{n\to\infty} \mu_{p_n}\big(D(0,r)\big)=0.
\end{equation}
 Then $\mu_{p_n}\Rightarrow \mu$ as  weak convergence of probability measures on $\mathbb{C}$.
\end{prop}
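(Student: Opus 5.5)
We follow the standard potential-theoretic route of \cite{bloom19}, adapted to the puncture at the origin. The plan has three steps: establish $L^1_{\mathrm{loc}}$ convergence of $\varphi_n$ away from $0$, identify the limit measure there, and then use tightness together with \eqref{eq:assump_no_accumulation_zero_origin} to control the mass near $0$ and at infinity.

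\textbf{Step 1: $L^1_{\mathrm{loc}}$ convergence on $E_r$.} By \ref{prop:bd_puncture:it1}, the family $\{\varphi_n\}$ is locally uniformly bounded above. It is also not locally $-\infty$, since by \ref{prop:bd_puncture:it3} $\varphi_n(z_i)$ converges to the finite value $U(z_i)$. The Hartogs compactness theorem for subharmonic functions therefore applies: every subsequence has a further subsequence converging in $L^1_{\mathrm{loc}}(\mathbb C)$ to some subharmonic $\varphi$.

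We then identify $\varphi=U$ on $\mathbb C\setminus\{0\}$.
\begin{itemize}
\item[(a)] Hartogs' lemma gives $\limsup\varphi_{n_k}(z)\le \varphi(z)$ locally uniformly on compacts when $\varphi$ is continuous. The correct general form is that for every compact $K$ and continuous $g>\varphi$ on $K$, eventually $\varphi_{n_k}<g$ on $K$.
\item[(b)] Pointwise we have $\varphi\ge\limsup\varphi_{n_k}$ a.e. in the sense of $L^1$ limits. Combined with the upper-semicontinuous regularization, this gives $\varphi(z)=\limsup_{w\to z}\,\limsup_k \varphi_{n_k}(w)$.
\item[(c)] By \ref{prop:bd_puncture:it2}, $\limsup_k\varphi_{n_k}\le U$ pointwise on $\mathbb C\setminus\{0\}$. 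Taking the upper regularization and using continuity of $U$ there gives $\varphi\le U$ on $\mathbb C\setminus\{0\}$.
\item[(d)] For the reverse inequality, Hartogs' lemma in form (a) gives, at each $z_i$, $U(z_i)=\lim\varphi_{n_k}(z_i)\le \varphi(z_i)$. Hence $\varphi=U$ on the dense set $\{z_i\}$.
\end{itemize}
Two subharmonic functions with $\varphi\le U$, $U$ continuous, and equality on a dense set must coincide. To see this, note that if $\varphi(w)<U(w)-2\delta$, upper semicontinuity of $\varphi$ and continuity of $U$ give $\varphi<U-\delta$ on a small disc around $w$. That disc contains some $z_i$, which is a contradiction.

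Thus every subsequential limit equals $U$ on $\mathbb C\setminus\{0\}$, which yields \eqref{L1_loc_conv}. The one delicate point is that $U$ may be $-\infty$ at $0$. This is harmless because every statement is made on $E_r$.

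\textbf{Step 2: weak convergence on the punctured plane.} Since $\Delta$ is continuous from $L^1_{\mathrm{loc}}$ to distributions, $\mu_{p_n}\to\mu$ vaguely on $\mathbb C\setminus\{0\}$. That is, $\int g\,d\mu_{p_n}\to\int g\,d\mu$ for every $g\in C_c^\infty(\mathbb C\setminus\{0\})$, and by density for every $g\in C_c(\mathbb C\setminus\{0\})$.

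\textbf{Step 3: controlling mass near $0$ and at $\infty$.} Fix bounded continuous $g$ and $\eps>0$. Using $\mu(\{0\})=0$ and $\mu(\mathbb C)=1$, choose $r$ small and $\rho$ large so that $\mu(D(0,2r))<\eps$ and $\mu(\mathbb C\setminus D(0,\rho/2))<\eps$. By \eqref{eq:assump_no_accumulation_zero_origin}, we may also take $r$ so small that $\limsup_n\mu_{p_n}(D(0,2r))<\eps$.

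Let $\chi\in C_c(\mathbb C\setminus\{0\})$ be a cutoff with $0\le\chi\le1$, $\chi=1$ on $\{2r\le|z|\le\rho/2\}$ and support in $\{r\le|z|\le\rho\}$. By Step 2, $\int\chi\,d\mu_{p_n}\to\int\chi\,d\mu\ge 1-2\eps$. Since each $\mu_{p_n}$ is a probability measure, the mass of $\mu_{p_n}$ outside $\{\chi=1\}$ is eventually at most about $3\eps$; this bounds simultaneously the mass escaping to infinity and the mass near $0$. Splitting $g=g\chi+g(1-\chi)$ then gives $\limsup|\int g\,d\mu_{p_n}-\int g\,d\mu|\le C\|g\|_\infty\eps$.

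The main obstacle is the subharmonic identification in Step 1: Hartogs' lemma must be applied in its correct (upper-regularized) form, and the argument must avoid the singularity at $0$. Everything else is standard bookkeeping.
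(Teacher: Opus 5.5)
Your proposal is correct and has the same structure as the paper. The only difference is that the paper obtains \eqref{L1_loc_conv} by invoking \cite[Theorem~2.4]{bloom19} on $E_r$, whereas you prove that step directly via Hartogs' compactness theorem, Hartogs' lemma and the density argument; your Steps~2--3 match the paper's distributional-Laplacian and cutoff-plus-tightness argument. Two small remarks: in (b) the fact you actually need is $\varphi=\limsup_k\varphi_{n_k}$ a.e., which follows from an a.e.-convergent further subsequence together with Hartogs, and not merely $\varphi\ge\limsup_k\varphi_{n_k}$; also, because your subsequential limits are taken in $L^1_{\mathrm{loc}}(\mathbb C)$ and agree with $U$ off $\{0\}$, you in fact get $\varphi_n\to U$ in $L^1_{\mathrm{loc}}(\mathbb C)$, so hypothesis \eqref{eq:assump_no_accumulation_zero_origin} is not needed in your route.
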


\noindent {\bf Proof of Theorem~\ref{thm:main_beta}.}
First, we compute the Riesz measure $\mu$ associated with the subharmonic function $G$ given in \eqref{def:V}.
Let $v:(0,\infty)\to\mathbb R$ be the radial profile $v(r)=G(z)$ with $r=|z|$.  Thus
\[
v(r)=
\begin{cases}
2\log\!\dfrac{4}{4-\log r}, & 0<r\le 1,\\
\log r+2\log\!\dfrac{4}{4+\log r}, & r\ge 1.
\end{cases}
\]
On each region $(0,1)$ and $(1,\infty)$ the function $v$ is $C^\infty$, and for a radial function one has
\[
\Delta G(z)=v''(r)+\frac{1}{r}v'(r),\qquad r=|z|.
\]
A direct computation gives
\[
v'(r)=\frac{2}{r(4-\log r)},
\qquad
v''(r)+\frac{1}{r}v'(r)=\frac{2}{r^2(4-\log r)^2}, \quad 0 < r < 1,
\]
and
\[
v'(r)=\frac{\log r+2}{r(4+\log r)},
\qquad
v''(r)+\frac{1}{r}v'(r)=\frac{2}{r^2(4+\log r)^2}, \quad 1 < r < \infty.
\]
Hence, both for $0<|z|<1$ and $|z|>1$
\begin{equation}\label{density_mu}
  \frac{1}{2\pi}\Delta G(z)=\frac{1}{\pi\,|z|^2\big(4 +|\log|z||\big)^2}.
  \end{equation}
At $r=1$ the two formulas agree and $v$ is $C^1$ across $r=1$ since
\[
v(1^-)=v(1^+)=0,
\qquad
v'(1^-)=\frac{2}{4}= \frac12,
\qquad
v'(1^+)=\frac{2}{4}=\frac12.
\]
Therefore the distributional Laplacian $\Delta G$ has no additional measure supported on $\{|z|=1\}$, and the measure $\mu= (2\pi)^{-1}\Delta G$
is absolutely continuous on $\mathbb C\setminus\{0\}$ with density against planar Lebesgue measure as given in \eqref{density_mu}.
For $0<\delta<1$, using polar coordinates and the above density,
\[
\mu\big(D(0,\delta)\big)
=\int_{0}^{\delta}\int_{0}^{2\pi}\frac{1}{\pi r^2(4-\log r)^2} r d\theta dr
=\int_{0}^{\delta}\frac{2}{r(4-\log r)^2}dr =  \frac{2}{4-\log \delta}.
\]
In particular, 
\[
\mu(\{0\})=\lim_{\delta \downarrow 0}\mu\big(D(0,\delta)\big)=0.
\]
This verifies that $\mu$ is indeed the measure given in the statement of the theorem.

Next, we show that $\mu(\mathbb C)=1$. For $t>1$, Green's identity  gives
\[
\mu(D(0,t))=\frac{1}{2\pi}\int_{|z|\le t }\Delta G dz
=\frac{1}{2\pi}\int_{|z|=t}\partial_{\mathsf n} G ds
=t v'(t) =  \frac{\log t +2}{(4+\log t)},
\]
where $\partial_{\mathsf n}$ is the outward normal derivative and $ds$ is the arclength measure on the circle. Letting $t\to\infty$ yields
\[
\mu(\mathbb C)=\lim_{t \to\infty}\mu(D(0,t))=1.
\]

To establish \eqref{eq:main_1_2}, it suffices to verify that every subsequence has a further subsequence along which the displayed convergence holds almost surely. 
Accordingly, fix an arbitrary subsequence, $(n_k)$. 
We now aim to obtain convergence of the empirical measure directly from Proposition~\ref{prop:bd_puncture}.

To conclude the proof, we must therefore the assumptions of the proposition for the family $\varphi_k={n_k}^{-1}  \log |P_{n_k}(z)|$. 

Since each $W_k$ is non-negative, 
\begin{align*}
   |P_n(z)| =  \Big|\sum_{k=0}^n e^{- n W_k} z^k\Big| \le (n+1)(1 + |z|)^n.
\end{align*}
Hence, $  n^{-1}  \log |P_n(z)| \le n^{-1} \log (n+1) + 1+ |z|.$ Consequently, 
$\{ n^{-1}  \log |P_n(z)|\}_{n \ge 1}$ is locally bounded above almost surely. This establishes assumption~\ref{prop:bd_puncture:it1}.

Next, iteratively apply Theorem~\ref{thm:conv_log_potential} for $C_k=\frac{k+1}{k}$ and $\epsilon_k=\frac{1}{k}$, to construct nested subsequences satisfying \eqref{eq:conv_log_potential_uniform}. Diagonalize over this to obtain a subsequence, which, with a slight abuse of notation we keep denoting by $n_k$, such that
\[
\limsup_{k\to\infty}\frac{1}{n_k}\log |P_{n_k}(z)| \le G(z), \qquad z\ne 0.
\]
This establishes assumption~\ref{prop:bd_puncture:it2}. Without loss of generality we assume $(n_k)$ was the original sequence and continue to denote it by $(n)$.  

Now, fix a countable dense set $\mathcal D\subset \mathbb C\setminus\{0\}$. 
Iteratively apply Theorem~\ref{thm:conv_log_potential} for $C_k=\frac{k+1}{k}$ and $\epsilon_k=\frac{1}{k}$, 
at each point $z\in\mathcal D$  to construct further nested subsequences such that
\[
\lim_{n_k\to\infty}n_k^{-1}  \log |P_{n_k}(z)|=G(z).
\]
Diagonalizing once again, this establishes assumption~\ref{prop:bd_puncture:it3}. 

It remains to verify that \eqref{eq:assump_no_accumulation_zero_origin} holds almost surely along $(n_k)$.

First note that \eqref{L1_loc_conv} implies that the following almost sure convergence of the circular mean for any $r \in \{q: q \in \mathbb{Q}_+ \} \cup \{\sqrt{q}: q \in \mathbb{Q}_+ \},$ where  $\mathbb{Q}_+$ denotes the set of positive rationals: 
\begin{equation}\label{circular_mean_conv}
\lim_{n\to\infty} m_{n}(r) := \lim_{n\to\infty}\frac{1}{2\pi}\int_0^{2\pi}\frac{1}{n}\log\big|P_{n}(re^{i\theta})\big| d\theta
=
\frac{1}{2\pi}\int_0^{2\pi}G(re^{i\theta})\,d\theta
=:m(r).
\end{equation}
Let $\{\zeta_{n_k, j}\}$ be its zeros (counted with multiplicity) of $P_{n_k}$.
Jensen's formula (centered at $0$) states
\[
m_{P_{n_k}}(r)=\log|P_{n_k}(0)|+\sum_{|\zeta_{n_k, j}|<r}\log\frac{r}{|\zeta_{n_k, j}|}.\]
Applying this with $r=\sqrt{\delta}$ and $r=\delta$ with  $\delta \in \mathbb{Q}_+ \cap (0, 1)$, and subtracting, we get
\[
m_{P_{n_k}}(\sqrt{\delta})-m_{P_{n_k}}(\delta)
=
\sum_{|\zeta_{n_k, j}|<\delta}\log\frac{\sqrt{\delta}}{\delta}
+\sum_{\delta\le |\zeta_{n_k, j}| <\sqrt{\delta}}\log\frac{\sqrt{\delta}}{|\zeta_{n_k, j}|}.
\]
The second sum is nonnegative, hence
\[
m_{P_{n_k}}(\sqrt{\delta})-m_{P_{n_k}}(\delta)
\ge N_{P_{n_k}}(D(0,\delta)) \log\frac{1}{\sqrt{\delta}},
\]
where $N_{P_{n_k}}(D(0,\delta)) $ denotes the number of zeros of $P_{N_k}$ in $D(0,\delta)$, counted with multiplicity. Therefore,
\[
N_{P_{n_k}}(D(0,\delta)) 
\le
\frac{m_{P_{n_k}}(\sqrt{\delta})-m_{P_{n_k}}(\delta)}{\log(1/\sqrt{\delta})}.
\]
By \eqref{circular_mean_conv}, after letting $k\to\infty$ in the Jensen bound, we obtain almost surely
\[
\limsup_{k\to\infty}\mu_{P_{n_k}}(D(0,\delta))
\le
\frac{m_{\sqrt{\delta}}-m_\delta}{\log(1/\sqrt{\delta})}.
\]
Since $G$ is radial, we have
\[
m_r=\frac{1}{2\pi}\int_0^{2\pi}G(re^{i\theta}) d\theta=v(r),
\]
and we already observed $v(r) =  - 2 \log| \log r | + O(1)$ as $r\to  0+$.
This implies that $m_{\sqrt{\delta}}-m_\delta=O(1)$ as $ \delta\to  0+$
and therefore
\[
\frac{m_{\sqrt{\delta}}-m_\delta}{\log(1/\sqrt{\delta})}\to 0
\quad\text{as }\delta\to 0+.
\]
Hence, \eqref{eq:assump_no_accumulation_zero_origin} holds almost surely along $(n_k)$, which yields Theorem~\ref{thm:main_beta}. \qed

\subsection{Proof of the upper bound}\label{sec: pf upper}
This Section is devoted to the proof of \eqref{eq:logpotential_ub}.

Recall $W_i$ and $R$ from Lemma~\ref{lem:rep}, and define
\begin{align*}
\ccW_{R-k} &= W_{R-k}-\E(W_{R-k}\ |\ R)=
\frac{1}{n+1}(E_0-1) +\sum_{m=1}^{k}\frac{k-m+1}{T_{R-m+1}} ( E_{-m} - 1), & 0\le k\le R, 
\\
\ccW_{R+k} &= W_{R+k}-\E(W_{R+k}\ |\ R)=\frac{1}{n+1}(E_0-1) +\sum_{m=1}^{k}\frac{k-m+1}{T_{n-R -m+1}} (E_{m}-1), & 1\le k\le n-R.
\end{align*}

Fix $z \in \mathbb{C} \setminus \{0\}$. By the triangle inequality, 
\begin{align*}
    |P_n(z)| &\le \sum_{\ell=0}^n e^{- n W_\ell } |z|^{\ell} \le (n+1) \max _{0 \le \ell \le n}  \exp \big( - n  \big( W_l - (\ell/n) \log |z|   \big)  \big).
\end{align*}
Therefore, 
\begin{align}%\label{logP_ub1}
    \frac{1}{n} \log |P_n(z)| &\le  \frac{1}{n} \log(n+1)  -  \min _{-R  \le i \le n-R}  \Big ( W_{R+i} - \frac{R+i}{n} \log |z| \Big).
\end{align}
Hence, recalling that $G(z)=\sup_{ t \in (-1/2, 1/2) }  - \psi(t; z)$, in order to achieve our goal 
 \eqref{eq:logpotential_ub},  it would suffice to 
show that, for any fixed $C>1$ and $\eps>0$, the following holds with probability tending to one as $n\to\infty$.
\begin{equation}\label{eq:up goal}
\min_{-R \le i \le n-R}\Big( W_{R+i} - \frac{R+i}{n}\log |z| \Big)
\ge \inf_{t\in (-1/2,1/2)}\psi(t;z) - \eps,
\quad \text{for all } C^{-1} \le |z| \le C,
\end{equation}

Define
\begin{equation}\label{def:psi_n}
\psi_n(i/n) = \E\bigl[W_{R+i}  |\ R\bigr], \qquad 
\psi_n(i/n; z) =  \psi_n(i/n)   - \frac{R+i}{n} \log |z|, \ \ -R \le i \le n- R.   
\end{equation}

We require three Lemmata, whose proofs we postpone to the end of this section. The first shows that $\psi_n(i/n; z)$ well approximates $\psi(i/n; z)$.

\begin{lem}\label{lem:psi_n_limit}
For any $C>1$, 
\[
\sup_{ C^{-1} \le |z| \le C}\Big|
\min_{i \in \cB_n }\psi_n(i/n; z)
-
\inf_{t\in (-1/2,1/2)}\psi(t; z)
\Big|
\xrightarrow{p} 0
\quad\text{as }n\to\infty,
\]
where $\psi_n(t; z)$ and $\psi(t; z)$ are as defined in \eqref{def:psi_n} and \eqref{def:psi} respectively.
\end{lem}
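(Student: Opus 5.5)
The plan is to compute $\psi_n$ in closed form, compare it with $\psi$ uniformly on compact subintervals of $(-1/2,1/2)$, and control the edges separately using the blow-up of $\psi(t;z)$ at $t=\pm1/2$. Throughout, $\cB_n$ is the index range $\{-R,\dots,n-R\}$ (possibly trimmed; the argument is the same).

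\textbf{Step 1: explicit formula for $\psi_n$.} Take $k\ge 0$ (the case $k<0$ is symmetric with $R$ in place of $n-R$) and set $N=n-R$. Substituting $j=N-m+1$ in Lemma~\ref{lem:rep} gives
\[
\psi_n(k/n)=\frac{1}{n+1}+\sum_{j=N-k+1}^{N}\frac{2\,(j-(N-k))}{j(j+1)} .
\]
Splitting $\frac{j-(N-k)}{j(j+1)}=\frac{1}{j+1}-\frac{N-k}{j(j+1)}$ and telescoping the second part, we get
\[
\psi_n(k/n)=2\big(H_{N+1}-H_{N-k+1}\big)-2(N-k)\Big(\frac{1}{N-k+1}-\frac{1}{N+1}\Big)+\frac1{n+1},
\]
where $H_m$ are harmonic numbers. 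Hence
\[
\psi_n(k/n)=2\log\frac{N}{N-k}-\frac{2k}{N}+O\Big(\frac{1}{N-k+1}\Big)
\]
with an absolute constant, valid for all $0\le k\le N$.

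\textbf{Step 2: concentration of $R$ and the interior.} By Lemma~\ref{lem:R_conc}, with probability tending to one we have $|R-n/2|\le n^{2/3}$, so $N/n=1/2+O(n^{-1/3})$ and $(R+i)/n=1/2+i/n+O(n^{-1/3})$. On this event, fix $\delta>0$ and restrict to $|i/n|\le 1/2-\delta$. There $N-k\ge \delta n/2$, so the formula of Step 1 gives $\psi_n(i/n)=\psi(i/n)+o(1)$ uniformly in $i$. The term $\frac{R+i}{n}\log|z|$ differs from $(1/2+i/n)\log|z|$ by at most $O(n^{-1/3})\log C$. Since $\psi(\cdot;z)$ is uniformly Lipschitz on $[-1/2+\delta,1/2-\delta]$ for $C^{-1}\le|z|\le C$, the grid $\{i/n\}$ is $1/n$-dense there. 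Together these give
\[
\sup_{C^{-1}\le |z|\le C}\ \Big|\min_{|i/n|\le 1/2-\delta}\psi_n(i/n;z)-\inf_{|t|\le 1/2-\delta}\psi(t;z)\Big|\to 0
\]
in probability.

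\textbf{Step 3: the edges (main obstacle).} Here $\psi_n$ stays finite while $\psi$ blows up, so uniform convergence fails. Only one-sided control is needed, and I would argue as follows.
\begin{itemize}
\item Since $|\log|z||\le\log C$ and $\psi(t)\to\infty$ as $|t|\to1/2$, we can choose $\delta$ so that the infimum of $\psi(\cdot;z)$ over $(-1/2,1/2)$ is attained in $|t|\le 1/2-2\delta$, and $\psi(t;z)\ge \inf\psi(\cdot;z)+1$ for $|t|\ge 1/2-2\delta$, uniformly in $z$. Then the infimum over the full interval equals the infimum over $|t|\le1/2-\delta$.
\item At the discrete level, Step 1 gives a lower bound in the edge region $N-k\le \delta n$: there $\psi_n(k/n)\ge 2\log(1/(2\delta))-2-O(1)$, which exceeds $\sup_{|z|\in[C^{-1},C]}\inf_t\psi(t;z)+1$ for $\delta$ small. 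This uses $2\log\frac{N}{N-k}\ge 2\log\frac{N}{\delta n}$, $2k/N\le 2$, and boundedness of the error term; it is the one place the $O(1/(N-k+1))$ error must be checked carefully, including at $k=N$.
\item Alternatively, convexity of $k\mapsto\psi_n(k/n;z)$ (the $\ccW$-free version of Remark~\ref{rem:W-convex}) shows the discrete minimum cannot sit in the edge once the value at $|i/n|=1/2-2\delta$ is already larger than the interior minimum.
\end{itemize}
Either way, with probability tending to one the minimum over $\cB_n$ coincides with the minimum over $|i/n|\le 1/2-\delta$ uniformly in $z$. Combining this with Step 2 yields the lemma.
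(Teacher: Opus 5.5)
Your proposal is correct and follows essentially the same route as the paper. On a high-probability event where $R/n\approx 1/2$, you show $\psi_n\to\psi$ uniformly on a bulk window $|t|\le 1/2-\delta$, bound $\psi_n(\cdot\,;z)$ from below on the edge window so that the minimum over $\cB_n$ lies in the bulk, and use the blow-up of $\psi$ at $\pm 1/2$ to identify the bulk infimum with the full one.

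The only difference is technical. You get both estimates from an exact harmonic-number telescoping of $\E[W_{R+k}\mid R]$, whereas the paper uses a Riemann-sum comparison with $\Phi(t,R/n)$ and the edge bound $\psi_n\ge\log\frac{R}{R-\ell+1}-1$. The loose ends you leave are harmless: the $\log C$ lost from the linear term on the edge, the $O(n^{2/3})$ mismatch between the window $N-k\le\delta n$ and the complement of $|i/n|\le 1/2-\delta$, and the case $k=N$, which $\cB_n$ excludes anyway.
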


Next, we separately consider coefficients in the bulk and near the edges. Denote $L_n:=\lfloor(\log n)^2\rfloor$,
\[
\cB_n  = ([-R+L_n, n-R-L_n])\cap\mathbb Z, \qquad \cE_n=\bigl([-R,-R+L_n]\cup[n-R-L_n,n-R]\bigr)\cap\mathbb{Z},
\]
so that $[-R, n-R]\cap \mathbb Z = \cB_n \cup \cE_n.$
Using Lemma~\ref{lem:psi_n_limit}, we lower bound the left-hand-side of \eqref{eq:up goal}.
\begin{align}\label{eq: up goal2}
     \min _{-R  \le i \le n-R}  \Big ( W_{R+i} - \frac{R+i}{n} \log |z| \Big) \ge \min \Big(  \min_{i \in \cE_n}  W_{R+i}  - |\log |z||,  \   \min_{i \in \cB_n} \psi_n(i/n; z) - \max_{ i \in \cB_n} |\ccW_{R+i}| \Big).
\end{align}

The next couple of lemmata bound these separately.
\begin{lem}\label{lem:eta_bulk_small}
As $n\to\infty$,
\[
\prob \Big ( \max_{ i \in \cB_n} \  | \ccW_{R+i}| \le (\log n)^{-1/3} \Big) \to 1.
\]
\end{lem}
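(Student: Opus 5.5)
Conditionally on $R$, each $\ccW_{R+i}$ is a weighted sum of centered independent exponentials. The plan is to bound its variance uniformly over $i\in\cB_n$, apply a sub-exponential (Bernstein-type) tail bound, and finish with a union bound over the at most $n+1$ indices. Since everything is conditional on $R$ and the bounds will be uniform in $R$, the unconditional statement follows directly.

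\textbf{Step 1 (variance bound).} Take $i=-k$ with $L_n\le k\le R-L_n$; the other side is symmetric with $R$ replaced by $n-R$. By the representation in Lemma~\ref{lem:rep},
\[
\ccW_{R-k}=\frac{E_0-1}{n+1}+\sum_{m=1}^k c_m(E_{-m}-1),\qquad c_m=\frac{k-m+1}{T_{R-m+1}}.
\]
Substitute $j=R-m+1$, so that $j$ ranges over $[R-k+1,R]$ and $j\ge L_n+1$. The coefficients satisfy
\[
c_m=\frac{2(k-R+j)}{j(j+1)}\le \frac{2}{j+1}\le \frac{2}{L_n}.
\]
For the variance,
\[
\sum_m c_m^2\le \sum_{j\ge L_n}\frac{4}{j^2}\le \frac{8}{L_n}\approx 8(\log n)^{-2}.
\]

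\textbf{Step 2 (tail bound).} Centered $\exp(1)$ variables are sub-exponential. Bernstein's inequality gives
\[
\prob\big(|\ccW_{R-k}|\ge s\mid R\big)\le 2\exp\Big(-c\min\Big(\frac{s^2}{\sum c_m^2},\ \frac{s}{\max c_m}\Big)\Big).
\]
Take $s=(\log n)^{-1/3}$. Then
\[
\frac{s^2}{\sum c_m^2}\gtrsim (\log n)^{2-2/3}=(\log n)^{4/3},\qquad \frac{s}{\max c_m}\gtrsim (\log n)^{2-1/3}=(\log n)^{5/3}.
\]
Both exceed $2\log n$ for large $n$, so each probability is $o(n^{-2})$. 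The $E_0$ term is deterministic-small: it contributes at most $(E_0+1)/(n+1)$, which is below $s/2$ with high probability. A union bound over at most $n+1$ indices, together with the symmetric treatment of the $i>0$ side, then yields the lemma.

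\textbf{Main point to check.} The only real obstacle is the coefficient bound $c_m\le 2/(j+1)$ with $j\ge L_n$. This is exactly where the edge cutoff $L_n=\lfloor(\log n)^2\rfloor$ enters. Near the edges $j$ is small and the fluctuations are of order one, which is why those indices are treated separately in $\cE_n$. Everything else is routine.
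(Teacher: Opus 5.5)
Your proof is correct and follows the paper's argument: conditionally on $R$, both $\|a\|_\infty$ and $\|a\|_2^2$ of the coefficient vector are $O(1/L_n)$ because every relevant $T$-index is at least $L_n$, and then Bernstein's inequality and a union bound over $O(n)$ indices finish the proof. The paper keeps the $E_0$ term inside Bernstein as $a_0=1/(n+1)$ rather than splitting it off, which makes no difference. One small slip is that on the negative side the bulk is $0\le k\le R-L_n$, not $L_n\le k\le R-L_n$; since your coefficient bound only uses $R-k\ge L_n$, it covers the whole range without change.
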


\begin{lem}\label{lem:edge}
For all $K >0$,
\[
\lim_{n\to\infty}\mathbb{P}\Bigl(\min_{i\in \cE_n} W_{R+i}\ge K \Bigr)=1.
\]
\end{lem}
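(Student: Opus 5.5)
The plan is to reduce the minimum over the edge set $\cE_n$ to two explicit variables, using monotonicity, and then show that each of them tends to infinity in probability by a first- and second-moment computation.

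\textbf{Step 1: reduction to two terms.} By Lemma~\ref{lem:R_conc}, with probability tending to one we have $n/4 \le R \le 3n/4$. In particular $L_n < R < n-L_n$ on this event, and we work on it from now on. By Remark~\ref{rem:W-convex}, $(W_k)$ is strictly decreasing on $\{0,\dots,R\}$ and strictly increasing on $\{R,\dots,n\}$. The left edge block $\{R+i: i\in[-R,-R+L_n]\}=\{0,\dots,L_n\}$ lies inside $\{0,\dots,R\}$. Similarly, the right edge block is $\{n-L_n,\dots,n\}$ and lies inside $\{R,\dots,n\}$. Hence
\[
\min_{i\in\cE_n} W_{R+i} = \min\big(W_{L_n},\, W_{n-L_n}\big).
\]
By the symmetry of the representation in Lemma~\ref{lem:rep} (exchange $R\leftrightarrow n-R$ and $E_{-m}\leftrightarrow E_m$), it suffices to show $\prob(W_{L_n}\ge K)\to1$. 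The same argument then applies verbatim to $W_{n-L_n}$.

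\textbf{Step 2: explicit lower bound.} Apply Lemma~\ref{lem:rep} with $k=R-L_n$ and drop the nonnegative term $E_0/(n+1)$. Substituting $j=R-m+1$, so that $k-m+1=j-L_n$ and $j$ ranges over $L_n+1,\dots,R$, gives
\[
W_{L_n} \ge S_n := \sum_{j=L_n+1}^{R} \frac{2(j-L_n)}{j(j+1)}\, E_{-(R-j+1)}.
\]
Conditionally on $R$, this is a weighted sum of i.i.d.\ $\exp(1)$ variables. Its conditional mean is
\[
\E[S_n\mid R]=\sum_{j=L_n+1}^{R}\frac{2}{j+1}-2L_n\sum_{j=L_n+1}^{R}\frac{1}{j(j+1)} \ge 2\log\frac{R+1}{L_n+2}-2 .
\]
On the event $R\ge n/4$ this is at least $2\log n - 4\log\log n - O(1)\to\infty$. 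Each weight is at most $2/j$, so the conditional variance is at most $\sum_{j>L_n}4/j^2\le 4/L_n\to0$.

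\textbf{Step 3: conclusion.} Chebyshev's inequality, applied conditionally on $R$ on the event $\{n/4\le R\le 3n/4\}$, gives $\prob(S_n\le K)\to 0$ for every fixed $K$. Combining this with Lemma~\ref{lem:R_conc}, which controls the complement of that event, completes the proof.

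There is no real obstacle. The only point needing care is the monotonicity reduction in Step 1, which requires the edge blocks not to straddle $R$. That is exactly what the concentration of $R$ provides.
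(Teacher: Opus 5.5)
Your proof is correct and follows essentially the same route as the paper. Both arguments restrict to $\{n/4\le R\le 3n/4\}$, use the V-shape from Remark~\ref{rem:W-convex} to reduce to $\min(W_{L_n},W_{n-L_n})$, and conclude with a conditional Chebyshev bound, where the conditional mean grows like $\log(n/L_n)$ and the conditional variance is $O(1/L_n)$. The only differences are cosmetic: you drop $E_0/(n+1)$ and compute the conditional mean exactly, whereas the paper truncates the sum at $2L_n$.
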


 Indeed, applying Lemma~\ref{lem:eta_bulk_small}, and Lemma~\ref{lem:edge} with \[
K:= \log C + \sup_{C^{-1} \le |z| \le C}\inf_{t\in (-1/2,1/2)}\psi(t;z) + 1,
\]
 reduced \eqref{eq:up goal} to
 \eqref{eq: up goal2}, concluding the proof. \qed

\medskip
The remainder of the section is devoted to the proof of Lemmata~\ref{lem:psi_n_limit},~\ref{lem:eta_bulk_small} and~\ref{lem:edge}.
\subsubsection{Convergence of expectations -- proof of Lemma~\ref{lem:psi_n_limit}}
Let $C,\varepsilon>0$, and denote $M=\log C$.
Given $\theta>0$, denote the sequence of events
\[
A_n(\theta):=\Bigl\{\Bigl|\frac{R}{n}-\frac12\Bigr|\le \theta\Bigr\},
\]
and observe that by  Lemma~\ref{lem:R_conc}
we have $\prob(A_n(\theta))\to1$. For all  $n$ sufficiently large, we have $L_n/n\le\theta$. Denoting
\begin{align*}
B(\theta)&:=[-1/2+2\theta,\,1/2-2\theta],&
B^c(\theta)&:=[-1/2,\,1/2]\setminus B(\theta),\\
B_n(\theta)&:=B(\theta)\cap \tfrac1n\mathbb Z,& \mathcal T_n&:=(1/n)\cB_n,
\end{align*}
this implies $\{B_n(\theta)
 \subseteq \mathcal T_n\} \supset A_n(\theta)$.
 Using $\lim_{|t| \to 1/2} \psi(t)  = \infty,$ we   
choose $\theta\in(0,1/8)$ sufficiently 
small to satisfy
\begin{equation}\label{eq: theta_choices}
\inf_{\,|t|\in B^c(\theta)}\psi(t)\ge \inf_{|t|<1/2}\psi(t)+M+3\varepsilon,
\qquad
\log\Big(\frac{1/2-\theta}{4\theta}\Big)-1\ge \inf_{|t|<1/2}\psi(t)+(1+\theta)M+2\varepsilon.
\end{equation}
%For $t\in\mathcal T_n\setminus B_n(\theta)$ we have $|t|\ge 1/2-2\theta$, and on $A_n(\theta)$ also $\mathcal T_n\subset[-1/2-\theta,\,1/2+\theta]$, hence $|t|\le 1/2+\theta$ for all $t\in\mathcal T_n$.

Next, we relate $\psi_n(t)$ and $\psi(t)$ within $B_n(\theta)$.
\begin{claim}\label{cl: control psi}
There exists a constant $C_\theta>0$ such that for all $n$ sufficiently large,
\begin{equation}\label{eq:bulk_unif_self}
\left\{\max_{t\in B_n(\theta)}|\psi_n(t)-\psi(t)|
\le C_\theta\Big(\Big|\frac{R}{n}-\frac12\Big|+\frac1n\Big)\right\}\supset A_n(\theta), 
\end{equation}    
\end{claim}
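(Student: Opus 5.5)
The plan is to compute $\psi_n(t)=\E[W_{R+i}\mid R]$ (with $t=i/n$) exactly from Lemma~\ref{lem:rep}, recognize it as a harmonic-number expression, and compare it with $\psi$ through an intermediate continuous profile that depends on $r:=R/n$.

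\emph{Step 1 (exact formula).} Take first $i=-k\le 0$. Since $\E E_{-m}=1$,
\[
\psi_n(-k/n)=\frac1{n+1}+\sum_{m=1}^{k}\frac{k-m+1}{T_{R-m+1}} .
\]
Substitute $j=R-m+1$, so that $k-m+1=j-(R-k)$ and $1/T_j=2/(j(j+1))$. Split $\frac{j}{j(j+1)}=\frac1{j+1}$, and write $\frac{1}{j(j+1)}=\frac1j-\frac1{j+1}$, which telescopes. This gives
\[
\psi_n(-k/n)=\frac1{n+1}+2\bigl(H_{R+1}-H_{R-k+1}\bigr)-\frac{2(R-k)\,k}{(R-k+1)(R+1)} ,
\]
where $H_m$ is the $m$-th harmonic number. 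The case $i=k>0$ is the same computation with $R$ replaced by $n-R$.

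\emph{Step 2 (continuum profile).} Use $H_a-H_b=\log(a/b)+O(1/b)$. Define, for $r\in(0,1)$ and $0\le s<r$,
\[
\phi(r,s)=-2\log\bigl(1-s/r\bigr)-2s/r .
\]
Then $\psi_n(t)=\phi(r,|t|)+O\bigl(1/(n(r-|t|))\bigr)$ for $t\le 0$, and $\psi_n(t)=\phi(1-r,t)+O\bigl(1/(n(1-r-t))\bigr)$ for $t>0$. The quotient term also contributes $\frac{2(R-k)k}{(R-k+1)(R+1)}=2s/r+O(1/n)$ once $R-k\gtrsim n$. Note that $\psi(t)=\phi(1/2,|t|)$.

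\emph{Step 3 (Lipschitz comparison on $B(\theta)$).} On $A_n(\theta)$ and for $t\in B_n(\theta)$, we have $|t|\le 1/2-2\theta$ and $r,1-r\ge 1/2-\theta$. Hence $1-|t|/r\ge \theta/(1/2-\theta)\ge 2\theta$, and $r-|t|\ge\theta$, with the analogous bounds for $1-r$. On this compact region $\partial_r\phi(r,s)$ is bounded by a constant depending only on $\theta$. The mean value theorem then gives $|\phi(r,|t|)-\phi(1/2,|t|)|\le C_\theta|r-1/2|$. Together with the $O(1/(\theta n))$ errors from Step 2, this yields \eqref{eq:bulk_unif_self}. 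The bound is deterministic given $R$, hence holds on all of $A_n(\theta)$.

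The main obstacle is only bookkeeping: checking that the harmonic-sum remainders stay uniformly $O_\theta(1/n)$. This requires $R-k\ge\theta n$, which holds on $A_n(\theta)$. It also requires tracking the index shift conventions ($R$ versus $n-R$, and the sign of $t$) correctly on the two sides.
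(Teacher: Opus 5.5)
Your proposal is correct and follows essentially the paper's route: compute $\E[W_{R+i}\mid R]$ explicitly, approximate it by the profile $-2\log(1-s/r)-2s/r$ at $r=R/n$ up to $O_\theta(1/n)$ (using $R-k\ge\theta n$ on $A_n(\theta)$), then use a bounded $\partial_r$ on a compact set to compare with $r=1/2$. The only difference is that you sum exactly via telescoping into harmonic numbers, whereas the paper replaces $1/T_u$ by $2/u^2$ and compares the sum with an integral; this is a cosmetic variation.
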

\begin{proof}
Let $t\in[-1/2+2\theta,0]\cap(1/n)\mathbb Z$ and write $\ell:=-nt$. Then re-indexing $s = \ell - m+1$, we have
\begin{equation}\label{rep:psi_negative_l}
  \psi_n(t)=\E[W_{R-\ell}\mid R]= \frac{1}{n+1}+\sum_{m=1}^{\ell}\frac{\ell -m+1}{T_{R-m+1}}  = \frac{1}{n+1}+\sum_{s=1}^{\ell}\frac{s}{T_{R-\ell+s}}.  
\end{equation}
On $A_n(\theta)$ we have 
$R-\ell\ge \theta n$ so that $u:=R-\ell+s \in [\theta n, n] $, implying  $u\asymp_\theta n$ for $1\le s\le \ell$. Observing that 
$1/T_u=2/(u(u+1))=2/u^2+O_\theta(n^{-3})$, and plugging this into \eqref{rep:psi_negative_l} gives
\[
\psi_n(t)=\sum_{s=1}^{\ell}\frac{2s}{(R-\ell+s)^2}+O_\theta(n^{-1}).
\]
As approximating the sum by the integral of $f(x)=2x/(R-\ell+x)^2$ incurs an additional error of at most $O_\theta(n^{-1})$, so that
\[
\int_0^\ell \frac{2x}{(R-\ell+x)^2}\,dx
=-2\log\Bigl(1-\frac{\ell}{R}\Bigr)-\frac{2\ell}{R}.
\]
All in all, denoting $\Phi(t,r):=-2\log(1+t/r)+2t/r$,
we obtain
\[ \psi_n(t)= \Phi\Bigl(t,\frac{R}{n}\Bigr) + O_\theta(n^{-1}), \]
uniformly on the grid $t\in[-1/2+2\theta,0]\cap(1/n)\mathbb Z$. Since $\Phi(t, r)$ is $C^1$ on the compact set \linebreak$[-1/2+2\theta,0] \times [1/2-\theta,1/2+\theta]$, we have $\sup_{(t, r) \in [-1/2+2\theta,0] \times [1/2-\theta,1/2+\theta]} \partial_r \Phi(t, r) < \infty $. Since
$\psi(t)=\Phi(t,1/2)$ for $t\le 0$, we obtain
\[
|\psi_n(t)-\psi(t)|\le \left | \Phi\Bigl(t,\frac{R}{n}\Bigr) - \Phi(t, 1/2) \right | + O_\theta(n^{-1})   \le  C_\theta\Big(\Big|\frac{R}{n}-\frac12\Big|+\frac1n\Big)
\]
uniformly over $t\in[-1/2+2\theta,0]\cap(1/n)\mathbb Z$ on $A_n(\theta)$. The case
$t\in[0,1/2-2\theta]\cap(1/n)\mathbb Z$ is identical, except we use the representation of $\psi_n(t)$ via
$W_{R+\ell}$ and replace $R$ by $n-R$.  \eqref{eq:bulk_unif_self} follows.
\end{proof}

Observe that, by the definition of $\psi_n(t;z),\psi(t;z)$ given in \eqref{def:psi_n} and \eqref{def:psi} respectively, 
for $t\in[-\frac{R}{n}, \frac{n-R}{n}]\cap(1/n)\mathbb Z$, we have
\begin{align*}
\left|\psi_n(t;z)-\psi(t;z)\right|&=\left|(\psi_n(t)-t\log|z|)-\frac{R}{n}\log|z|-(\psi(t)-t\log|z|)+\frac12 \log|z|\right|\\
&\le \Big|\psi_n(t)-\psi(t)\Big|+\left|\frac{R}{n}-\frac12 \right|\log|z|.
\end{align*}

Writing $I_C=(C^{-1},C)$ and
taking supremum over $|z|\in I_C$ and using $|\log|z||\le M$, Lemma~\ref{lem:R_conc} yields
\begin{equation}\label{eq: reduce term}\sup_{|z|\in I_C}\left|\frac12 \log|z|-\frac{R}{n}\log|z|\right|\le M \left|\frac R n - \frac12\right|\overset{\mathrm{p}}\to 0.\end{equation}

Hence,  Claim~\ref{cl: control psi} 
implies that for $n>$ sufficiently large, we have
\begin{equation}\label{eq: goal pointwise}
\sup_{t\in B_n(\theta)} \sup_{|z|\in I_C}
\bigl|\psi_n(t;z)-\psi(t;z)\bigr|
\stackrel{p}{\to}  0 .\end{equation}
Had the minimum of both functions in Lemma~\ref{lem:psi_n_limit} been taken on $B_n(\theta)$, we would be done. We are therefore left with showing that the minimum on this set converges to the global minimum for both functions. Denote 
\begin{align*}
\hspace{40pt}\Psi_n(t;z)&:=\psi_n(t)-t\log|z| =  \psi_n(t; z)  + \frac{R}{n} \log |z|   &&\text{for $t\in[-\frac{R}{n}, \frac{n-R}{n}]\cap(1/n)\mathbb Z$ ,\hspace{40pt}}\\
\hspace{40pt}\Psi(t;z)&:=\psi(t)-t\log|z| =  \psi(t; z) +\frac{1}{2}\log|z|,
&&\text{for $|t|<\frac12$}
.\hspace{40pt}
\end{align*}

As for
$\Psi(t;z)$, we know using our choice of $\theta$ in \eqref{eq: theta_choices} that for all $|z| \in I_C$
\begin{equation}\label{eq:min loc}
\min_{t\in (-1/2,1/2)} \Psi(t;z)=\min_{t\in B(\theta)} \Psi(t;z).
\end{equation}

On $[-1/2+2\theta,\,1/2-2\theta]$, $\psi$ is $C^1$, hence $\sup_{|t|\in B(\theta)}|\psi'(t)|<\infty$. Thus,
for $|z|\in I_C$, the function $\Psi(t;z)$ is Lipschitz on $B(\theta)$  with uniform Lipschitz constant
$K_\theta:=\sup_{|t|\le 1/2-2\theta}|\psi'(t)|+M$.
Hence, 
\begin{equation}\label{eq:min loc1}
\sup_{|z| \in I_C} \left | \min_{t\in B_n(\theta)} \Psi(t;z) -  \min_{|t| < 1/2} \Psi(t;z) \right|  = \sup_{|z| \in I_C} \left | \min_{t\in B_n(\theta)} \Psi(t;z) -  \min_{t\in B(\theta)} \Psi(t;z) \right| \le \frac{K_\theta}{n},
\end{equation}
where the equality follows from \eqref{eq:min loc}. 

To control the minimum of $\psi_n$, we 
require a lower bound on 
$\psi_n(t)$ complement $B_n(\theta)$.

\begin{claim}\label{cl: control psi2}
Denote $c(\theta)=\log\Big(\frac{1/2-\theta}{4\theta}\Big)-1$.
For $t\in\mathcal T_n\setminus B_n(\theta)$, we have
\begin{equation}\label{eq: psi_det_lb}
  \left\{\psi_n(t)\ge \log\Big(\frac{(1/2-\theta)n}{(3\theta)n+1}\Big)-1
\ge \log\Big(\frac{1/2-\theta}{4\theta}\Big)-1\right\}\supset A_n(\theta)
\end{equation}
for all sufficiently large $n$.

\end{claim}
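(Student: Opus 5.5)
We take $t\in\mathcal T_n\setminus B_n(\theta)$ and work on $A_n(\theta)$. By symmetry (swap $R$ with $n-R$ and $E_{-m}$ with $E_m$) it suffices to treat $t<0$. Write $\ell=-nt$, so that $L_n\le \ell\le R$ and $|t|>1/2-2\theta$. We use the exact representation \eqref{rep:psi_negative_l}:
\[
\psi_n(t)=\frac{1}{n+1}+\sum_{s=1}^{\ell}\frac{s}{T_{R-\ell+s}}\ \ge\ \sum_{s=1}^{\ell}\frac{2s}{(R-\ell+s+1)^2},
\]
which follows from $T_u=u(u+1)/2\le (u+1)^2/2$. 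All terms are nonnegative, so we may keep only part of the sum. Set $a:=R-\ell+1\ge 1$. On $A_n(\theta)$ we have $R\le (1/2+\theta)n$ and $\ell\ge(1/2-2\theta)n$, hence $a\le 3\theta n+1$. We also have $\ell\ge(1/2-2\theta)n\ge(1/2-\theta)n-1$; after accounting for the rounding in the $\log$ term, this is enough.

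Next we compare the sum with an integral. The function $x\mapsto 2x/(a+x)^2$ is increasing for $x\le a$ and decreasing for $x\ge a$, so the sum is at least the integral over $[1,\ell]$ minus one maximal term $\max_x 2x/(a+x)^2=1/(2a)\le 1/2$. The integral is explicit:
\[
\int_0^{\ell}\frac{2x}{(a+x)^2}\,dx=2\log\frac{a+\ell}{a}-\frac{2\ell}{a+\ell}\ \ge\ 2\log\frac{a+\ell}{a}-2 .
\]
The piece over $[0,1]$ is at most $2/a^2$, which is harmless when $a$ is large but needs care when $a$ is $O(1)$. This gives $\psi_n(t)\ge 2\log(\ell/a)-O(1)$, which has better constants than the target (coefficient $2$ instead of $1$). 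We therefore have slack and can afford to be crude: to reach the stated form $\log(\cdot)-1$, we use $2\log(\ell/a)-3\ge \log(\ell/a)-1$ whenever $\ell/a\ge e^2$. This holds for small $\theta$ and large $n$, since $\ell/a\ge (1/2-2\theta)/(3\theta+1/n)$. Inserting $\ell\ge (1/2-\theta)n$ (up to rounding) and $a\le 3\theta n+1$ yields the first inequality in \eqref{eq: psi_det_lb}. The second inequality is immediate, since $3\theta n+1\le 4\theta n$ once $n\ge 1/\theta$.

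A cleaner route, which I would try first to avoid the integral bookkeeping, is a direct block bound. For $s\in[a,\ell]$ we have $a+s\le 2s$, so $2s/(a+s)^2\ge 1/(2s)$. Hence
\[
\psi_n(t)\ge \tfrac12\sum_{s=a}^{\ell}\frac1s\ge \tfrac12\log\frac{\ell}{a}.
\]
This has coefficient $1/2$, which is too weak, so the integral comparison above is the one I would actually carry out.

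The main obstacle is getting the constant right, namely coefficient $1$ on the logarithm and additive loss exactly $1$, while uniformly handling the regime where $a=R-\ell+1$ is of order $1$ (the edge $t\approx -R/n$). There the discrete sum and the integral differ most, and the $[0,1]$ piece and the maximal term must be bounded carefully. If this is delicate, I would fall back on the exact telescoping identity $\sum_s s/T_{a+s-1}$ with $1/T_u=2/u-2/(u+1)$. Summation by parts then gives $2\sum_{u=a+1}^{a+\ell}1/u$ minus a bounded term, and harmonic sum estimates give $\ge \log((a+\ell)/a)-1$ directly.
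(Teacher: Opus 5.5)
Your skeleton matches the paper's: bound the exact sum \eqref{rep:psi_negative_l} below by a logarithm, then insert the bounds available on $A_n(\theta)$. However, the main route as written contains a false step and proves less than the claim.

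The inequality $(1/2-2\theta)n\ge(1/2-\theta)n-1$ fails as soon as $n>1/\theta$. For $t\notin B_n(\theta)$ you only know $\ell>(1/2-2\theta)n$, which falls short of $(1/2-\theta)n$ by $\theta n$, not by a rounding error. So ``inserting $\ell\ge(1/2-\theta)n$'' is not justified. The quantity that is at least $(1/2-\theta)n$ on $A_n(\theta)$ is $R$, not $\ell$. The paper's bound is $\log\frac{R}{R-\ell+1}-1$, and your own integral already produces $\log\frac{a+\ell}{a}$ with $a+\ell=R+1$. You should keep $a+\ell$ rather than weaken it to $\ell$.

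Separately, trading $2L-c_0$ for $L-1$ requires $L\ge c_0-1$. With your constants ($c_0\ge 3$) this forces $\theta$ below an absolute threshold. The claim, however, is made for the $\theta$ fixed in \eqref{eq: theta_choices}. That condition only guarantees $(1/2-\theta)/(4\theta)\ge e^{1+(1+\theta)M+2\varepsilon}$, which is only slightly more than $e$ when $M,\varepsilon$ are small. This would be harmless for the application, since both conditions in \eqref{eq: theta_choices} persist when $\theta$ is decreased, but it does not prove the claim as stated.

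The regime $a=O(1)$ you worry about does not arise. Since $t\in\mathcal T_n$, you have $\ell\le R-L_n$, so $a\ge L_n+1$. Then the losses beyond the $-2$ coming from the integral are $o(1)$.

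Your fallback is the cleanest fix, and it is sharper than the paper's argument. With $b=R-\ell$, Abel summation using $1/T_u=2/u-2/(u+1)$ gives the exact identity
\[
\sum_{s=1}^{\ell}\frac{s}{T_{b+s}}=2\sum_{u=b+1}^{R}\frac1u-\frac{2\ell}{R+1}.
\]
The harmonic range is $u=R-\ell+1,\dots,R$; yours is shifted by one. Hence $\psi_n(t)\ge\max\{0,\,2(L-1)\}\ge L-1$ with $L=\log\frac{R+1}{R-\ell+1}$. On $A_n(\theta)$ you have $R+1\ge(1/2-\theta)n$ and $R-\ell+1\le 3\theta n+1$, so the claim follows for every $\theta$. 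Note that the case $L<1$ uses the positivity of $\psi_n(t)$, which your ``directly'' skips.

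The paper instead uses the cruder bound $1/T_u\ge 1/u^2$ and the split
$\sum_{u=b+1}^{R}\frac{u-b}{u^2}=\sum_{u=b+1}^{R}\frac1u-b\sum_{u=b+1}^{R}\frac1{u^2}$, with $b\sum_{u>b}u^{-2}\le 1$. This gives up the factor $2$, but lands on $\log\frac{R}{R-\ell+1}-1$ in one line, with no case split and no smallness condition on $\theta$.
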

\begin{proof}
Observe that, on $A_n(\theta)$, we have $|t|\le \tfrac12+\theta$.
Write $\ell :=-nt$ if $t\le 0$ (the case $t\ge 0$ is identical with $R$
replaced by $n-R$). Since $t\in\mathcal T_n$ we have $\ell\le R$, and on $A_n(\theta)$ also $\ell\ge \big(1/2-2\theta\big)n$ and
$R\le \big(1/2+\theta\big)n$; hence $0\le R-\ell\le 3\theta n$. Using \eqref{rep:psi_negative_l}, 
we obtain
\begin{align*}
\psi_n(t)&\ge \sum_{s=1}^{\ell}\frac{s}{(R-\ell+s)^2}
=\sum_{u=R-\ell+1}^{R}\frac{u-(R-\ell)}{u^2}\\
&=\sum_{u=R-\ell+1}^{R}\frac{1}{u}-(R-\ell)\sum_{u=R-\ell+1}^{R}\frac{1}{u^2}
\ge \log\Big(\frac{R}{R-\ell+1}\Big)-1.
\end{align*}
Since $R\ge (1/2-\theta)n$ and $R-\ell+1\le 3\theta n+1$ on $A_n(\theta)$, this yields
\[
\psi_n(t)\ge \log\Big(\frac{(1/2-\theta)n}{(3\theta)n+1}\Big)-1 \ge  \log\Big(\frac{1/2-\theta}{4\theta}\Big)-1,
\]
where the last inequality is true for $n\ge 1/\theta$ so that $(3\theta)n+1\le 4\theta n$, confirming \eqref{eq: psi_det_lb}.
\end{proof}

In contrast, on $A_n(\theta),$
by Claim~\ref{cl: control psi2}, for $t\in\mathcal T_n\setminus B_n(\theta)$ and all sufficiently large $n$, $|z| \in I_C$,
\begin{equation}\label{eq:edge_lb}
\Psi_n(t;z)\ge c(\theta)-\Big(\frac12+\theta\Big)M
\ge \inf_{|u| < 1/2}\psi(u)+\frac{M}{2}+2\varepsilon
\ge \inf_{|u| < 1/2}\Psi(u;z)+2\varepsilon,
\end{equation}

From \eqref{eq: goal pointwise} and  \eqref{eq:min loc1}, we deduce that 
\begin{equation}\label{eq: bulk_ub}
   \prob \left (   \min_{t\in B_n(\theta)}\Psi_n(t; z) \le \inf_{|t|<1/2}\Psi(t; z) +  \eps \text{ for all }  |z|\in I_C \right) \to 1.
\end{equation}
Combining \eqref{eq:edge_lb} and   \eqref{eq: bulk_ub},
we deduce that, 
\begin{equation}\label{eq:min_in_bulk_final_n}
\lim_{n\to\infty}\prob\left(\min_{t\in\mathcal T_n}\big(\psi_n(t)-t\log|z|\big)=\min_{t\in B_n(\theta)}\big(\psi_n(t)-t\log|z|\big) \  \forall |z|\in I_C \right)=1 .
\end{equation}

Putting together \eqref{eq: goal pointwise},\eqref{eq:min loc1} and \eqref{eq:min_in_bulk_final_n}, the lemma follows.
\qed
\subsubsection{Controlling bulk coefficients -- Proof of Lemma~\ref{lem:eta_bulk_small}}
\begin{proof}
Let $t_n:=(\log n)^{-1/3}$. Let $j = R+i$. We need to show that $\max_{L_n\le j\le n-L_n}|\ccW_j| > t_n$ happens with vanishing probability. 

Write $X_0:=E_0-1$ and $X_{\pm m}:=E_{\pm m}-1$ for $m\ge 1$. Then $\{X_m\}_{|m|\le n}$ are i.i.d., mean zero,
sub-exponential, and we set $K:=\|X_1\|_{\ast}<\infty$, where $\|\cdot \|_{\ast}$ is the Orlicz norm with weight function $e^{x}-1$ as defined in Proposition~\ref{prop:conc}.
Condition on $R$ and fix $j\in\{L_n,\dots,n-L_n\}$. We consider the cases $j\le R$ and $j> R$ separately.
In the case $j\le R$, letting $k:=R-j$ and re-indexing with $s:=k - m+1$ gives
\[
\ccW_{R-k} = \ccW_j=\frac{1}{n+1}\widetilde X_0+\sum_{s=1}^{k}\frac{s}{T_{j+s}} \widetilde X_s,
\]
where $\{\widetilde X_s\}$ is a subcollection of the $\{X_m\}_{|m|\le n}$. Hence $\ccW_j=\sum_{s=0}^k a_s \widetilde X_s$ with
$a_0=(n+1)^{-1}$ and $a_s=s/T_{j+s}$, so using $T_u\ge u^2/2$,
\[
\|a\|_\infty\le \max \Big(\frac{1}{n+1},  \sup_{s\ge 1}\frac{2s}{(j+s)^2} \Big) \le \max \Big( \frac{1}{n+1}, \frac{1}{2j})\le \frac{C}{L_n},
\]
and 
\[ \|a\|_2^2\le \frac{1}{n^2}+4\sum_{s\ge 1}\frac{s^2}{(j+s)^4}\le \frac{C}{j}\le \frac{C}{L_n},\]
for a universal constant $C<\infty$. The case $j\ge R$ is identical after replacing $j$ by $n-j$, and again yields
$\|a\|_\infty\le C/L_n$ and $\|a\|_2^2\le C/L_n$, uniformly in $R$ and $j\in[L_n,n-L_n]$.

By Proposition~\ref{prop:conc}, 
\[
\prob\big(|\ccW_j|\ge t_n\mid R\big)
\le 2\exp\Big(-c\min\Big(\frac{t_n^2}{K^2\|a\|_2^2},\frac{t_n}{K\|a\|_\infty}\Big)\Big)
\le 2\exp(-c' t_n^2 L_n)
=2\exp\big(-c'(\log n)^{4/3}\big),
\]
uniformly over $R$ and $j\in[L_n,n-L_n]$. A union bound gives
\[
\prob\Big(\max_{L_n\le j\le n-L_n}|\ccW_j|>t_n\ \Big|\ R\Big)
\le 2n\exp\big(-c'(\log n)^{4/3}\big)\to 0,
\]
and taking expectation over $R$ completes the proof.
\end{proof}
\subsubsection{Controlling edge coefficients -- Proof of Lemma~\ref{lem:edge}}
\begin{proof}
Let $A_n:=\{n/4\le R\le 3n/4\}$. By Lemma~4.3, $\mathbb{P}(A_n^c)\to 0$.
On $A_n$ and for all large $n$ we have $-R+L_n<0<n-R-L_n$, hence by Remark~\ref{rem:W-convex},
\[
\min_{i\in \cE_n} W_{R+i}=\min\bigl(W_{L_n},W_{n-L_n}\bigr).
\]
It suffices to show that $\mathbb{P}(W_{L_n}\ge K)\to 1$ since the bound for $W_{n-L_n}$
is identical.

Fix $R$ and work conditionally on $R$. Using Lemma~\ref{lem:rep} and reindexing with $s=R-m+1$,
\[
W_{L_n}=\frac{1}{n+1}E_0+\sum_{s=L_n+1}^{R} b_s \widetilde{E}_s,
\qquad
b_s:=\frac{s-L_n}{T_s}=\frac{2(s-L_n)}{s(s+1)},
\]
where $\{\widetilde{E}_s\}$ is a subcollection of $\{E_{-m}\}$, hence i.i.d.\ $\mathrm{exp}(1)$ and
independent of $E_0$. Since $\mathbb{E}[\widetilde{E}_s]=1$ and $\mathrm{Var}(\widetilde{E}_s)=1$,
\[
\mathbb{E}[W_{L_n}| R]=\frac{1}{n+1}+\sum_{s=L_n+1}^{R} b_s,
\qquad
\mathrm{Var}(W_{L_n} | R)=\frac{1}{(n+1)^2}+\sum_{s=L_n+1}^{R} b_s^2.
\]
On $A_n$ we have $R\ge n/4$. For $s\ge 2L_n$, $s-L_n\ge s/2$, hence $b_s\ge 1/(s+1)$ and therefore,
for all large $n$,
\[
\mathbb{E}[W_{L_n}| R]\ge \sum_{s=2L_n}^{\lfloor n/4\rfloor}\frac{1}{s+1}
\ge \log\Bigl(\frac{n}{8L_n}\Bigr)\to\infty.
\]
Also $b_s\le 2/s$, so $b_s^2\le 4/s^2$ and
\[
\mathrm{Var}(W_{L_n} | R)\le \frac{1}{n^2}+4\sum_{s=L_n+1}^{\infty}\frac{1}{s^2}
\le \frac{C_0}{L_n}\to 0,
\]
for a universal constant $C_0$. By Chebyshev's inequality,
\[
\mathbb{P}(W_{L_n}< K \mid R)\to 0,
\]
uniformly over $R\in[n/4,3n/4]$.
Taking expectation over $R$ yields $\mathbb{P}(W_{L_n}<K, A_n)\to 0$ and hence
$\mathbb{P}(W_{L_n}\ge K)\to 1$.
\end{proof}

\subsection{Proof of the lower bound}\label{sec: pf lower}
Fix $z\ne 0$. Recall $L_n:=\lfloor(\log n)^2\rfloor$ and $\cB_n:=[-R+L_n,\,n-R-L_n]\cap\mathbb Z$. Let $-R\le k_*\le n-R$ be an index (say, largest) such that
\[
\psi_n(k_*/n;z)=\min_{k\in\cB_n}\psi_n(k/n;z).
\]
To avoid the index $0$, we slightly modify the definition by setting
\[
\upsilon:=k_*+\mathbf 1_{\{k_*=0\}},
\]
so that $\upsilon \ne 0$. By Lemma~\ref{lem:psi_n_limit}, $\psi_n(k_*/n;z)\to -G(z)$ in probability. On the other hand, a direct computation shows that
\[
\big|\psi_n(0;z)-\psi_n(1/n;z)\big|
\le \frac{1}{T_{n-R}}+\frac{|\log|z||}{n}
\stackrel{p}{\to}0,
\]
by Lemma~\ref{lem:R_conc}. Combining these yields
\begin{equation}\label{eq:upsilon_to_istar}
\psi_n(\upsilon/n;z)=-G(z)+o_p(1).
\end{equation}
Thus, the lower bound \eqref{eq:logpotential_lb} is equivalent to the following.

\begin{prop}\label{prop:lb}
As $n\to\infty$,
\[
|P_n(z)|\ge e^{-o_p(n)}\,\exp\big(-n\psi_n(\upsilon/n;z)-n\ccW_{R+\upsilon}\big).
\]
\end{prop}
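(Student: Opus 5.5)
The plan is to extract the dominant term of $P_n(z)$ using the randomness of a single exponential variable, after conditioning on everything else. The term of index $R+\upsilon$ in $P_n(z)=\sum_k e^{-nW_k}z^k$ has modulus exactly
\[
\exp\bigl(-n\psi_n(\upsilon/n;z)-n\ccW_{R+\upsilon}\bigr),
\]
so Proposition~\ref{prop:lb} says that, up to a subexponential factor, there is no cancellation against this term.

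\textbf{Step 1: isolate a single variable.} Suppose $\upsilon>0$; the case $\upsilon<0$ is symmetric and uses $E_{\upsilon}$ with $T_{R-\cdot}$ in place of $T_{n-R-\cdot}$. By Lemma~\ref{lem:rep}, $E_\upsilon$ enters only the $W_{R+k}$ with $k\ge \upsilon$, and it does so with coefficient $(k-\upsilon+1)/T_{n-R-\upsilon+1}$. Set $\lambda:=n/T_{n-R-\upsilon+1}$. Then
\[
P_n(z)=A+\sum_{j\ge 0} c_j\, w^{\,j+1},\qquad w:=e^{-\lambda E_\upsilon},
\]
where $A$ and the $c_j$ are measurable with respect to $\sigma(R,\{E_m\}_{m\neq \upsilon})$, and $|c_0 w|$ is the modulus of the dominant term. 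Conditionally, this is a polynomial $Q(w)$ of degree at most $n+1$ in the single random variable $w$. Since $\upsilon\in\cB_n$, we have $n-R-\upsilon\ge L_n$. On the event of Lemma~\ref{lem:R_conc} this gives $\lambda\lesssim n/L_n^2$. In the bulk $\lambda\asymp 1/n$.

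\textbf{Step 2: reduce the effective degree.} To avoid paying $e^{O(n)}$ from the degree, truncate to a window $|j|\le \delta n$ around the dominant term. Terms outside the window should be negligible compared to $|c_0|$. The justification is that $\psi_n(\cdot;z)$ is uniformly strictly convex on the bulk, since $\psi''\ge 8$, so such terms lose $e^{-c\delta^2 n}$ in the mean profile. The fluctuations $n\ccW$ are comparatively small: differences of $n\ccW$ over separation $\delta n$ are of order $\sqrt{\delta}\,n\cdot o(1)$, controlled as in Lemma~\ref{lem:eta_bulk_small} but now applied to increments.

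\textbf{Step 3: anticoncentration for the truncated polynomial.} This is the step I expect to be the main obstacle. I would apply a Remez / Turán–Nazarov type inequality for the truncated polynomial, or exponential sum, in $x=E_\upsilon$. The target is
\[
\mathrm{Leb}\bigl\{x\in[0,1]:|Q(x)|\le s\bigr\}\le C\bigl(s/\sup_{[0,1]}|Q|\bigr)^{1/D}
\]
with effective degree $D=O(\delta n)$. Since $E_\upsilon$ has density bounded below on $[0,1]$, choosing $s=e^{-\eta n}\sup|Q|$ gives a bad probability of at most $Ce^{-\eta/\delta}$. Letting $\delta\to 0$ and then $\eta\to 0$ yields the $e^{-o_p(n)}$ loss.

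\textbf{Step 4: compare the supremum with the dominant term.} It remains to show that $\sup_{x\in[0,1]}|Q(x)|\ge e^{-o(n)}|c_0|$. For this I would use a coefficient-extraction (Cauchy or Chebyshev) bound on the truncated window. The $w$-interval $\{e^{-\lambda x}:x\in[0,1]\}$ has length about $\lambda$, so extracting a single coefficient costs roughly $(C/\lambda)^{D}$. This loss is $e^{O(\delta n\log n)}$, which is not subexponential, so the naive version fails. I would therefore replace the single variable $E_\upsilon$ by a block of about $\delta n$ consecutive variables $E_\upsilon,\dots,E_{\upsilon+\delta n}$. These act on the terms like independent multiplicative tilts, and I would apply the anticoncentration one variable at a time, peeling off one term per step. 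The aim is a loss of $e^{O(1)}$ per step in probability and only a polynomial loss in size, giving $e^{-o(n)}$ overall. If this iteration is too lossy, the fallback is to exploit the dependence of $z^k$ on its phase. Making this step work with correlated coefficients is where I expect most of the difficulty.
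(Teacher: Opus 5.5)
There is a genuine gap, and it sits in the anticoncentration step itself. Tilting by the single innovation $E_\upsilon$ multiplies \emph{every} term with $k\ge\upsilon$ by $w^{k-\upsilon+1}$. You are therefore forced to bound small-ball probabilities, at relative precision $e^{-\eta n}$, for a polynomial in $w$ whose effective degree is $D\asymp n$. Without truncation, your Remez-type bound gives $Ce^{-\eta n/D}$, which is a constant and not $o(1)$.

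Truncation cannot lower $D$ enough. The mean profile is only quadratic at its minimum, so discarding the terms with $|k-\upsilon|>\delta n$ gains just a factor $e^{-c\delta^2 n}$. For the discarded tail to fall below the threshold $e^{-\eta n}|c_0|$ you need $c\delta^2\ge\eta$. Step~3, however, gives a vanishing bound $Ce^{-\eta/\delta}$ only when $\delta\ll\eta$, and the two requirements are incompatible for small $\eta$. On top of this, as you note yourself, Step~4 loses $e^{O(\delta n\log n)}$ because $w$ ranges over an interval of length $\asymp\lambda\asymp 1/n$. The block-peeling suggestion is not yet an argument.

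The missing idea is to pick a direction in the randomness that moves \emph{only} the dominant term. This removes any need for polynomial anticoncentration, truncation, or a lower bound on a supremum.

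\begin{itemize}
\item The paper takes three consecutive innovations and sets $X=F_\upsilon+F_{\upsilon+1}+F_{\upsilon+2}$, $Y=F_\upsilon-F_{\upsilon+2}$ and $Z=F_\upsilon-2F_{\upsilon+1}+F_{\upsilon+2}$.
\item For $k\neq\upsilon$, the coefficients of $\ccW_{R+k}$ on $(F_\upsilon,F_{\upsilon+1},F_{\upsilon+2})$ are affine in position: $0$ for $k<\upsilon$, $(2,1,0)$ for $k=\upsilon+1$, and $(k-\upsilon+1,k-\upsilon,k-\upsilon-1)$ for $k\ge\upsilon+2$. Their $Z$-component, which is one sixth of the second difference, therefore vanishes, and they contribute $(k-\upsilon)X+Y$.
\item Only $\ccW_{R+\upsilon}$ involves $Z$, through $Z/6$. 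After dividing by the dominant term, the claim becomes $|e^{-nZ/6}-Q|\ge e^{-o_p(n)}$. Here $Q$ is measurable with respect to $R$, $(X,Y)$ and the remaining $E_m$, however large the other terms are.
\item Conditionally on $(X,Y)$, $Z$ has a truncated exponential density on an interval of length $\ell(X,Y)$. This length is at least $n^{-3}$ with high probability (Lemma~\ref{lem:good_xy}), so the density is at most $Cn^3$.
\item Also $|Z|=O_p(1/\log n)$, because $n-R-\upsilon\ge L_n$.
\item Since $t\mapsto e^{-nt/6}$ is monotone, the set of $t$ with $|t|\le(\log n)^{-1/2}$ and $|e^{-nt/6}-q|\le e^{-\kappa n}$ has measure at most $(C/n)e^{-\kappa n+n/(6\sqrt{\log n})}$, uniformly in $q$. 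The bad event therefore has probability $o(1)$ for every $\kappa>0$.
\end{itemize}
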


Indeed, by \eqref{eq:upsilon_to_istar} and Lemma~\ref{lem:eta_bulk_small}, Proposition~\ref{prop:lb} implies
\begin{align*}
\frac{1}{n}\log|P_n(z)|
&\ge -\psi_n(\upsilon/n;z)-\ccW_{R+\upsilon}-o_p(1)\\
&=G(z)-o_p(1).
\end{align*}
 The remainder of the section is devoted to the proof of Proposition~\ref{prop:lb}.

Denote
\[ F_0 = \frac{E_0  -1}{n+1}, \ \ F_{m} = \frac{E_{m} - 1}{T_{R +m+1}}, \ \ -R \le m \le -1, \quad 
F_{m} = \frac{E_{m} - 1}{T_{n-R-m+1}}, \ \ 1 \le m \le n-R. \]
Then, for $-R\le k\le n-R$,
\[
\ccW_{R+k}=F_0+\sum_{m=1}^{k}(k-m+1)F_m \ \text{if } \ 1\le k\le n-R,\quad
\ccW_{R+k}=F_0+\sum_{m=k}^{-1}(-k+m+1)F_m \ \  \text{if }\ -R\le k\le 0.
\]
If $\upsilon\ge 1$, define
\[
X:=F_{\upsilon}+F_{\upsilon+1}+F_{\upsilon+2},\qquad
Y:=F_{\upsilon}-F_{\upsilon+2},\qquad
Z:=F_{\upsilon}-2F_{\upsilon+1}+F_{\upsilon+2}.
\]
If $\upsilon \le -1$, define
\[
X:=F_{\upsilon}+F_{\upsilon-1}+F_{\upsilon-2},\qquad
Y:=F_{\upsilon}-F_{\upsilon-2},\qquad
Z:=F_{\upsilon}-2F_{\upsilon-1}+F_{\upsilon-2}.
\]
In either case, these relations invert to
\[
F_{\upsilon}=\frac{X}{3}+\frac{Y}{2}+\frac{Z}{6},\qquad
F_{\upsilon\pm 1}=\frac{X}{3}-\frac{Z}{3},\qquad
F_{\upsilon\pm 2}=\frac{X}{3}-\frac{Y}{2}+\frac{Z}{6}.
\]
The idea behind these linear combinations is that every variable $\ccW_{R+k}$, for $-R\le k\le n-R$ with $k\ne \upsilon$, can be expressed as a function of
\begin{align*}
\mathcal{F}_+ &=  \{ E_m : -R \le m \le n - R, m  \ne  \upsilon, \upsilon+1, \upsilon+2  \} \cup  \{  X, Y \}, \ \ \text{ if } \ \upsilon \ge 1 \\
\mathcal{F}_- &=  \{ E_m : -R \le m \le n - R, m  \ne  \upsilon, \upsilon - 1, \upsilon - 2  \} \cup  \{  X, Y \}, \ \ \text{ if } \ \upsilon \le  -1.
\end{align*}
To justify the above, we note that for $\upsilon\ge 1$ and $1\le k\le n-R, k\ne \upsilon$:
\[
\ccW_{R+k}
=F_0+\sum_{\substack{1\le m\le k\\ m\notin\{\upsilon,\upsilon+1,\upsilon+2\}}}(k-m+1)F_m
+
\begin{cases}
0, & 1\le k\le \upsilon-1,\\
(k-\upsilon)X+Y, & \upsilon+1\le k\le n-R.
\end{cases}
\]
On the other hand, for $\upsilon\le -1$ and $-R\le k\le 0, k\ne \upsilon$:
\[
\ccW_{R+k}
=F_0+\sum_{\substack{k\le m\le -1\\ m\notin\{\upsilon,\upsilon-1,\upsilon-2\}}}(-k+m+1)F_m
+
\begin{cases}
(\upsilon-k)X+Y, & -R\le k\le \upsilon-1,\\
0, & \upsilon+1\le k\le 0.
\end{cases}
\]
For $k=\upsilon,$ we have 
\[
\ccW_{R+\upsilon}
=
\begin{cases}
\displaystyle
F_0+\sum_{m=1}^{\upsilon-1}(\upsilon-m+1)F_m+\frac{X}{3}+\frac{Y}{2}+\frac{Z}{6},
& \upsilon\ge 1,\\[1.2ex]
\displaystyle
F_0+\sum_{m=\upsilon+1}^{-1}(-\upsilon+m+1)F_m+\frac{X}{3}+\frac{Y}{2}+\frac{Z}{6},
& \upsilon\le -1.
\end{cases}
\]
To prove Proposition~\ref{prop:lb}, it suffices to establish the following anticoncentration bound.
 \begin{equation}\label{eq:lb1}
\Big | e^{ -  n \ccW_{R+\upsilon}}  + \sum_{-R \le k \le n-R, k \ne \upsilon} b_{R+k}  e^{ - n \ccW_{R+ k}}  \Big| \ge e^{ - o_p(n)}  e^{ - n \ccW_{R+\upsilon}},
\end{equation}
where 
\begin{equation}\label{def:bi}
b_{R+k} = \exp \big (-n \psi_n(k/n; z) +n \psi_n(\upsilon/n; z) \big) \omega^{k - \upsilon}, \quad -R \le k \le n -R, \quad  \omega = z/|z|.
\end{equation}
From now on, we only consider the case $\upsilon\ge 1$, since the argument in the other case is identical.  

Set
\[
m:=n-R-\upsilon \in [L_n, n],\qquad 
\delta_{0}:=\frac{1}{T_{m+1}},\quad
\delta_{1}:=\frac{1}{T_{m}},\quad
\delta_{2}:=\frac{1}{T_{m-1}},
\qquad T_j=\frac{j(j+1)}{2},
\]
so that $F_{\upsilon+i}=\delta_i(E_{\upsilon+i}-1)$ for $i=0,1,2$. 
Then the joint density of $(F_{\upsilon},F_{\upsilon+1},F_{\upsilon+2})$ at $(f_{\upsilon},f_{\upsilon+1},f_{\upsilon+2})$ is
\[
\frac{e^{-3}}{\delta_{0}\delta_{1}\delta_{2}}
\exp\!\Big(-\frac{f_{\upsilon}}{\delta_{0}}-\frac{f_{\upsilon+1}}{\delta_{1}}-\frac{f_{\upsilon+2}}{\delta_{2}}\Big)
\mathbf 1_{\{f_{\upsilon}>-\delta_{0},\ f_{\upsilon+1}>-\delta_{1},\ f_{\upsilon+2}>-\delta_{2}\}}.
\]

Consequently, the joint density of $(X,Y,Z)$ at $(x,y,z)$ is
\begin{gather*}
\frac{e^{-3}}{6\delta_{0}\delta_{1}\delta_{2}}
\exp\!\Big(-\Big(T_m+\frac{1}{3}\Big)x-\Big(m+\frac{1}{2}\Big)y-\frac{z}{6}\Big)\,
\mathbf{1}_{\{x/3+y/2+z/6>-\delta_{0},\ x/3-z/3>-\delta_{1},\ x/3-y/2+z/6>-\delta_{2}\}}\\
=\frac{e^{-3}}{6\delta_{0}\delta_{1}\delta_{2}}
\exp\!\Big(-\Big(T_m+\frac{1}{3}\Big)x-\Big(m+\frac{1}{2}\Big)y-\frac{z}{6}\Big)\,
\mathbf 1_{\{\ell_-(x,y)<z<\ell_+(x,y)\}},
\end{gather*}
where the factor $6$ comes from the Jacobian
$\big|\det\frac{\partial(x,y,z)}{\partial(f_{\upsilon},f_{\upsilon+1},f_{\upsilon+2})}\big|=6$, and the constraints
$f_{\upsilon}>-\delta_{0}$, $f_{\upsilon+1}>-\delta_{1}$, $f_{\upsilon+2}>-\delta_{2}$ are equivalent to
\[
z>\max\big(-2x-3y-6\delta_{0},\ -2x+3y-6\delta_{2}\big)=:\ell_-(x,y),
\qquad
z<x+3\delta_{1}=:\ell_+(x,y).
\]
Note that the joint density is positive only if 
\begin{equation}\label{eq:ell_length}
  \ell(x, y) := \ell_{+} - \ell_{-} 
= 3 \min\big( x + y + \delta_{1} + 2\delta_{0},\ x - y + \delta_{1} + 2\delta_{2} \big) > 0.  
\end{equation}
In particular, the conditional density of $Z$ given $(X,Y)=(x,y)$ such that satisfies $\ell(x, y)> 0$
\[
p_{Z\mid X,Y}(z\mid x,y)\propto e^{-z/6}\,\mathbf 1_{\{\ell_-(x,y)<z<\ell_+(x,y)\}}.
\]
It is easy to check that 
\[
3\big( x - |y| + 3\delta_{0} \big) \ \le\ \ell(x, y) \ \le\ 3\big( x - |y| + 3\delta_{2} \big).
\]

\begin{lem}\label{lem:good_xy}
Assume $\upsilon\ge 1$ and let $s:=n-R-\upsilon$. If $\eps_n=o(s^{-2})$, then
\[
\mathbb P\big(\ell(X,Y)\ge \eps_n\big)\to 1,
\]
where $\ell(X,Y)=\ell_+(X,Y)-\ell_-(X,Y)$ is defined in \eqref{eq:ell_length}.
\end{lem}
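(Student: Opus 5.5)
The plan is to compute $\ell(X,Y)$ explicitly in terms of the underlying exponentials. The two linear forms in \eqref{eq:ell_length} then turn out to be nonnegative combinations of $E_\upsilon, E_{\upsilon+1}, E_{\upsilon+2}$, and the statement reduces to a small-ball estimate for a single exponential variable.

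\textbf{Step 1: rewrite the two forms.} Recall that $F_{\upsilon+i}=\delta_i(E_{\upsilon+i}-1)$ for $i=0,1,2$. Since $X+Y=2F_\upsilon+F_{\upsilon+1}$, we get
\[
X+Y+\delta_1+2\delta_0=2\delta_0E_\upsilon+\delta_1E_{\upsilon+1}.
\]
Since $X-Y=F_{\upsilon+1}+2F_{\upsilon+2}$, we get
\[
X-Y+\delta_1+2\delta_2=\delta_1E_{\upsilon+1}+2\delta_2E_{\upsilon+2}.
\]
Hence
\[
\ell(X,Y)=3\min\big(2\delta_0E_\upsilon+\delta_1E_{\upsilon+1},\ \delta_1E_{\upsilon+1}+2\delta_2E_{\upsilon+2}\big)\ \ge\ 3\delta_1E_{\upsilon+1},
\]
because all the exponentials are nonnegative.

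\textbf{Step 2: measurability and independence.} We need the law of $E_{\upsilon+1}$ given $R$ to be $\exp(1)$. The index $k_*$ minimizes $\psi_n(k/n;z)$ over $\cB_n$. Here $\psi_n$ is a conditional expectation given $R$, and $z$ is fixed. So $k_*$, and therefore $\upsilon$ and $s=m=n-R-\upsilon$, are all $\sigma(R)$-measurable. The family $\{E_j\}$ is independent of $R$, so conditionally on $R$ the variable $E_{\upsilon+1}$ is a standard exponential.

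We also check that the indices make sense. Since $k_*\in\cB_n$, we have $\upsilon\le n-R-L_n+1$, so $m\ge L_n-1\ge 2$ for large $n$. Thus $\upsilon+2\le n-R$, and $\delta_0,\delta_1,\delta_2$ are well defined with $\delta_1=1/T_s=2/(s(s+1))\ge 1/s^2$.

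\textbf{Step 3: small-ball bound.} Conditionally on $R$,
\[
\prob\big(\ell(X,Y)<\eps_n\mid R\big)\le \prob\big(E_{\upsilon+1}<\eps_n T_s/3\mid R\big)=1-e^{-\eps_nT_s/3}\le \eps_n s^2/3 .
\]
Under the hypothesis $\eps_n=o(s^{-2})$, the right side tends to $0$. If $s$ is random, we read the hypothesis as $\eps_n s^2\to0$ (in probability). Taking expectations over $R$ and using bounded convergence of $\min(1,\eps_n s^2/3)$ then gives $\prob(\ell(X,Y)\ge\eps_n)\to1$.

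The only point needing care is Step 2: the selection of $\upsilon$ must not depend on the exponentials $E_j$. This holds because $\upsilon$ is defined through $\psi_n$, a function of $R$ alone, rather than through $\ccW$. The rest is the algebraic identity in Step 1.
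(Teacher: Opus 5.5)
Your proof is correct. At the decisive step it takes a different route from the paper's written argument, and yours closes where the paper's does not. The paper starts from the lossy bound $\ell(X,Y)\ge 3(X-|Y|+3\delta_0)$ and discards the shift $3\delta_0$. It then estimates $\prob(X+Y<\eps_n/3)+\prob(X-Y<\eps_n/3)$ by products of one-variable exponential small-ball bounds. But $X\pm Y$ are centered, so these probabilities stay bounded away from zero. For instance, $\prob(X+Y<0)=\prob(2\delta_0E_\upsilon+\delta_1E_{\upsilon+1}<2\delta_0+\delta_1)\to(1-e^{-3/2})^2\approx 0.6$, since $\delta_0/\delta_1\to 1$. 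Correspondingly, the paper's final displayed bound is about $9$ when $\delta_0,\delta_1,\delta_2\asymp s^{-2}$ (each of its two terms is about $9/2$), not $o(1)$.

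Your exact identity keeps the deterministic offsets $\delta_1+2\delta_0$ and $\delta_1+2\delta_2$, which cancel the centering exactly. This gives $\ell(X,Y)=3\min(2\delta_0E_\upsilon+\delta_1E_{\upsilon+1},\,\delta_1E_{\upsilon+1}+2\delta_2E_{\upsilon+2})$, a nonnegative combination of exponentials. The single-variable bound $\ell\ge 3\delta_1E_{\upsilon+1}$ then yields $\prob(\ell<\eps_n\mid R)\le \eps_n s^2/3$, which is all that is needed. For the paper's choice $\eps_n=n^{-3}$ one even has $\eps_n s^2\le n^{-1}$ deterministically.

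The paper's product idea would also work if applied to this exact form of $\ell$, giving the stronger bound $O(\eps_n^2 s^4)$, but that extra strength is never used. Your Step~2 also makes explicit a point the paper uses only tacitly: $\upsilon$ (hence $s$ and $\delta_0,\delta_1,\delta_2$) is a function of $R$ alone. So $E_\upsilon,E_{\upsilon+1},E_{\upsilon+2}$ are i.i.d.\ standard exponentials given $R$. Your index check $s\ge L_n-1\ge 2$ correctly covers the case $k_*=0$, where the paper's claim $m\in[L_n,n]$ is off by one, harmlessly.
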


\begin{proof}
Using $\ell(x,y)\ge 3(x-|y|+3\delta_0)$, we have
\[
\mathbb P\big(\ell(X,Y)<\eps_n\big)\le 
\mathbb P\Big(X-|Y|<\frac{\eps_n}{3}\Big)
\le \mathbb P\Big(X+Y<\frac{\eps_n}{3}\Big)+\mathbb P\Big(X-Y<\frac{\eps_n}{3}\Big),
\]
Now
\[
X+Y=2F_{\upsilon}+F_{\upsilon+1}=2\delta_0(E_{\upsilon}-1)+\delta_1(E_{\upsilon+1}-1),
\]
so
\[
\Big\{X+Y<\frac{\eps_n}{3}\Big\}
=\Big\{2\delta_0E_{\upsilon}+\delta_1E_{\upsilon+1}<\frac{\eps_n}{3}+2\delta_0+\delta_1\Big\}
\subseteq
\Big\{E_{\upsilon}<\frac{\eps_n+6\delta_0+3\delta_1}{6\delta_0}\Big\}
\cap
\Big\{E_{\upsilon+1}<\frac{\eps_n+6\delta_0+3\delta_1}{3\delta_1}\Big\}.
\]
Since the exponential density is bounded above by one, we obtain 
\[
\mathbb P\Big(X+Y<\frac{\eps_n}{3}\Big)
\le \frac{(\eps_n+6\delta_0+3\delta_1)^2}{18\,\delta_0\delta_1}.
\]
Similarly, since $X-Y=F_{\upsilon+1}+2F_{\upsilon+2}= \delta_1(E_{\upsilon+1}-1)+2\delta_2(E_{\upsilon+2}-1)$,
\[
\mathbb P\Big(X-Y<\frac{\eps_n}{3}\Big)
\le \frac{(\eps_n+3\delta_1+6\delta_2)^2}{18\,\delta_1\delta_2}.
\]
Therefore,
\[
\mathbb P\big(\ell(X,Y)<\eps_n\big)
\le \frac{(\eps_n+6\delta_0+3\delta_1)^2}{18\,\delta_0\delta_1}
+\frac{(\eps_n+3\delta_1+6\delta_2)^2}{18\,\delta_1\delta_2}.
\]
Since $\delta_0,\delta_1,\delta_2\asymp s^{-2}$ and $\eps_n=o(s^{-2})$, the right-hand side tends to $0$, proving the claim.
\end{proof}
As a consequence of Lemma~\ref{lem:good_xy}, by choosing $\eps_n=n^{-3}$ we deduce that
\begin{equation}\label{eq:K_n}
    \mathbb{P}(K_n) \to 1,
\end{equation}
where $K_n: = \{ (x, y) \in \R^2:  \ell(x, y) \ge n^{-3}\}$.

Multiplying, our goal, \eqref{eq:lb1} by 
$\exp\Big(nF_0+n\sum_{m=1}^{\upsilon-1}(\upsilon-m+1)F_m + n\big(X/3 + Y/2\big)\Big)$, we can rewrite it as
\[
\big|e^{-nZ/6}-Q\big|\ge e^{-o_p(n)}e^{-n Z/6},
\]
for some $\mathcal F_+$-measurable random variable $Q$. We  then take $n^{-1} \log (\cdot)$ of both sides this reduces to
\begin{equation} \label{eq:anti_conc_abbrev}
\tfrac{1}{n}\log  \big | e^{ -  n Z/6}  - Q \big| 
\ge  - o_p(1)   - \frac{Z}{6} =  - o_p(1),
\end{equation}
where the last equality follows from
\begin{equation}\label{eq:Z_bound}
  |Z| \le 4\max_{|m|\le n}|F_m|
\le \frac{4\max_{|m|\le n} (E_m + 1)}{T_{n-R-\upsilon-1}} \le \frac{8 \max_{|m|\le n} (E_m + 1)}{L_n }  =  \frac{O_p(\log n)}{(\log n)^2}  =  O_p(1/\log n).  
\end{equation}
It remains to show \eqref{eq:anti_conc_abbrev}. Fix $\kappa>0.$
Then 
\begin{align*}
\prob\Big(\big|e^{-nZ/6}-Q\big|\le e^{-\kappa n}\Big)
&\le \prob\Big(\big|e^{-nZ/6}-Q\big|\le e^{-\kappa n},\ (X,Y)\in K_n,\ |Z|\le (\log n)^{-1/2}\Big) \\
&\quad + \prob\big((X,Y)\notin K_n\big)
+ \prob\big(|Z|>(\log n)^{-1/2}\big).
\end{align*}
The second and third terms on the right-hand side tend to zero by \eqref{eq:K_n} and \eqref{eq:Z_bound}, respectively. It therefore suffices to show that the first term converges to zero.
Hence, in order to conclude the proof it would suffice to show the following.

\begin{equation}\label{eq:final_sup_bound}
\lim_{n\to\infty}\sup_{(x,y)\in K_n,\ q\in\mathbb R}
\prob\Big(\big|e^{-nZ/6}-q\big|\le e^{-\kappa n},\ (X,Y)\in K_n,\ |Z|\le (\log n)^{-1/2}\,\Big|\,X=x,\ Y=y\Big)=0.    
\end{equation}

Fix $(x,y)\in K_n$. Conditionally on $(X,Y)=(x,y)$, the density of $Z$ is
\[
p_{Z\mid X,Y}(z\mid x,y)
=\frac{e^{-(z-\ell_-(x,y))/6}}{6\big(1-e^{-\ell(x,y)/6}\big)}
\mathbf 1_{\{\ell_-(x,y)<z<\ell_+(x,y)\}}
\le \frac{1}{6\big(1-e^{-\ell(x,y)/6}\big)}
\mathbf 1_{\{\ell_-(x,y)<z<\ell_+(x,y)\}}.
\]
Using $1-e^{-u}\ge u/2$ for $0<u\le 1$ and $\ell(x,y)\ge n^{-3}$ on $K_n$, we obtain
\begin{equation}\label{cond_density_bd}
\sup_{z}p_{Z\mid X,Y}(z\mid x,y)
=\frac{1}{6(1-e^{-\ell(x,y)/6})}
\le \frac{C}{\min\{\ell(x,y),1\}}
\le Cn^3,
\qquad (x,y)\in K_n,
\end{equation}
for an absolute constant $C$. For $q\in\mathbb R$, define
\[
S(q):=\Big\{z\in[-(\log n)^{-1/2},(\log n)^{-1/2}]\cap(\ell_-(x,y),\ell_+(x,y)):\ \big|e^{-nz/6}-q\big|\le e^{-\kappa n}\Big\}.
\]
We view $t=e^{-nz/6}$ as a change of variables. Since $z\mapsto e^{-nz/6}$ is strictly monotone on $\mathbb R$, it is bijective onto $(0,\infty)$ and
\[
z=-\frac{6}{n}\log t,\qquad dz=\frac{6}{n}\frac{dt}{t}.
\]
On $|z|\le (\log n)^{-1/2}$ we have $t=e^{-nz/6}\ge e^{-n/(6\sqrt{\log n})}$, hence $t^{-1}\le e^{n/(6\sqrt{\log n})}$. Therefore,
\[
\mathrm{Leb}(S(q))
\le \frac{6}{n}\int_{\{t:\ |t-q|\le e^{-\kappa n}\}}\frac{dt}{t}
\le \frac{6}{n}\cdot 2e^{-\kappa n}\cdot e^{n/(6\sqrt{\log n})}
\le \frac{C}{n}\exp\Big(-\kappa n+\frac{n}{6\sqrt{\log n}}\Big),
\]
uniformly in $q\in\mathbb R$.

Therefore, using the density bound \eqref{cond_density_bd}, we have
\begin{align*}
\prob\Big(\big|e^{-nZ/6}-q\big|\le e^{-\kappa n},\ |Z|\le (\log n)^{-1/2} \,\Big|\, X=x,Y=y\Big)
&\le \sup_{z}p_{Z\mid X,Y}(z\mid x,y)\cdot \mathrm{Leb}(S(q)) \\
&\le C n^{3}\cdot \frac{C}{n}\exp\Big(-\kappa n+\frac{n}{6\sqrt{\log n}}\Big)
= o(1),
\end{align*}
uniformly over $(x,y)\in K_n$ and $q\in\mathbb R$, which establishes \eqref{eq:final_sup_bound} and concludes the proof. 
\qed

\appendix

\section{Appendix}\label{sec: appendix}
\subsection{Standard probabilistic tools}
The following concentration inequality for sums of independent sub-exponential random variables is standard, see \cite{vershynin2020high}[Theorem 2.8.2].
\begin{prop}[Bernstein's inequality] \label{prop:conc}
    Let $X_1, X_2, \ldots, X_n$ be i.i.d.\ mean-zero sub-exponential random variables and let $a = (a_1, a_2, \ldots, a_n) \in \mathbb{R}^n$. Then for any $t \ge 0$, we have
    \[\prob \Big ( \big | \sum_{i=1}^n a_i X_i \big |  \ge t \Big)
    \le 2 \exp \Big( -c \min \Big(\tfrac{t^2}{ K^2 \| a\|_2^2}, \tfrac{t}{ K \| a\|_\infty} \Big) \Big),\]
    where $K = \| X_1\|_\ast: = \inf \{ s> 0: \E \exp( |X_1|/s) \le 2  \} < \infty$ is the Orlicz norm  with weight function $e^{x}-1$.
\end{prop}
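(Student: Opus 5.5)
The plan is the standard Chernoff argument: first control the moment generating function of a single mean-zero sub-exponential variable near the origin, then tensorize over the independent summands and optimize the Chernoff parameter. By homogeneity it suffices to treat $K=1$: replace $X_i$ by $X_i/K$ and $t$ by $t/K$, which produces exactly the $K^2$ and $K$ scalings in the two terms of the minimum.

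\emph{Step 1 (moment bound).} Assume $\E e^{|X_1|}\le 2$. Since $x^p/p!\le e^{x}$ for $x\ge 0$, taking expectations gives $\E|X_1|^p\le 2\,p!$ for every integer $p\ge 1$.

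\emph{Step 2 (local MGF bound).} Expand the exponential and use $\E X_1=0$:
\[
\E e^{\lambda X_1}=1+\sum_{p\ge 2}\frac{\lambda^p\,\E X_1^p}{p!}\le 1+2\sum_{p\ge2}|\lambda|^p .
\]
For $|\lambda|\le 1/2$ the geometric sum is at most $2\lambda^2$, so
\[
\E e^{\lambda X_1}\le 1+4\lambda^2\le e^{4\lambda^2},\qquad |\lambda|\le 1/2 .
\]
Interchanging sum and expectation is justified by dominated convergence, since $e^{|\lambda||X_1|}$ is integrable for $|\lambda|\le 1$.

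\emph{Step 3 (Chernoff and tensorization).} Let $S=\sum_i a_iX_i$ and take $\lambda\ge0$ with $\lambda\|a\|_\infty\le 1/2$. Each $|\lambda a_i|\le 1/2$, so independence and Step 2 give $\E e^{\lambda S}\le e^{4\lambda^2\|a\|_2^2}$. Markov's inequality then yields
\[
\prob(S\ge t)\le \exp\big(-\lambda t+4\lambda^2\|a\|_2^2\big).
\]
Choose $\lambda=\min\big(t/(8\|a\|_2^2),\,1/(2\|a\|_\infty)\big)$ and consider the two cases.
\begin{itemize}
\item If the first term is the minimum, the exponent equals $-t^2/(16\|a\|_2^2)$.
\item If the second term is the minimum, then $4\lambda^2\|a\|_2^2\le \lambda t/2$, so the exponent is at most $-\lambda t/2=-t/(4\|a\|_\infty)$.
\end{itemize}
In either case $\prob(S\ge t)\le \exp\big(-c\min(t^2/\|a\|_2^2,\,t/\|a\|_\infty)\big)$ with $c=1/16$. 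Applying the same bound to $-S$, i.e.\ to the variables $-X_i$, which have the same Orlicz norm, and taking a union bound gives the factor $2$.

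None of these steps is a real obstacle. The only point needing care is Step 2: the bound holds only for $|\lambda|$ of order $1/K$, and this restriction is precisely what forces the linear regime $t/(K\|a\|_\infty)$ in the final estimate.
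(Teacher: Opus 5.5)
Your proof is correct and is essentially the standard argument behind the result the paper cites without proof (Vershynin, Theorem 2.8.2): a local bound $\E e^{\lambda X_1}\le e^{C\lambda^2K^2}$ for $|\lambda|$ of order $1/K$, obtained from moment growth, then tensorization and a Chernoff bound optimized under the constraint $\lambda\|a\|_\infty\le 1/(2K)$. Your constants also check out: after normalizing to $K=1$ you get $c=1/16$.
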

\subsection{Criterion for convergence of the empirical root distribution}\label{sec:app root conv}
This section is devoted to the proof of Proposition~\ref{prop:bd_puncture}.
Let $r>0$.
First note that each $\varphi_n$ is subharmonic on $\mathbb C$, so that its restriction to the open set $E_r$ is also subharmonic.

Since $U$ is continuous on $\mathbb C\setminus\{0\}$, it is continuous on $E_r$.
By assumption \textup{(ii)}, for every $z\in E_r$ we have $\limsup_{n\to\infty}\varphi_n(z)\le U(z)$, and therefore its upper-semicontinuous regularization on $E_r$ given by 
\[\overline \psi(z)=\limsup_{w\to z}\lim_{n\to\infty}\varphi_n(z)=\inf_{\delta>0}\sup\{\lim_{n\to\infty}\varphi_n(w)\ :\ w\in E_r, |w-z|<\delta\},\qquad z\in E_R,\]
satisfies
\[\overline \varphi(z) \le U
\qquad\text{on }E_r.
\]
%Here, given a function $f:D\to[-\infty,\infty)$ on an open set $D\subset\mathbb C$, we denote by $f^\ast$ its upper semicontinuous regularization, defined by
%\[
%f^\ast(z)=\limsup_{w\to z} f(w)=\inf_{\delta>0}\ \sup\big\{f(w): w\in D,\ |w-z|<\delta\big\},\qquad z\in D.\]

Moreover, by \textup{(iii)} there is a countable dense set $\{z_i\}_{i\ge 1}\subset \mathbb C\setminus\{0\}$ with
$\varphi_n(z_i)\to U(z_i)$; intersecting with $E_r$ yields a countable dense set in $E_r$ with the same property.
Together with the local upper bound \textup{(i)} (which remains valid after restriction to $E_r$),
we may apply \cite[Theorem~2.4]{bloom19} on $E_r$ to conclude that
$\varphi_n \to U$ in $ L^1_{\mathrm{loc}}(E_r)$,
establishing \eqref{L1_loc_conv}.

Let $\phi\in C_c^\infty(E_r)$. Using the definition $\mu_{p_n}=\frac{1}{2\pi}\Delta\varphi_n$ and distributional
duality, we have
\[
\int_{\mathbb C}\phi\,d\mu_{p_n}
=\frac{1}{2\pi}\langle \Delta\varphi_n,\phi\rangle
=\frac{1}{2\pi}\langle \varphi_n,\Delta\phi\rangle.
\]
Since $\Delta\phi$ is smooth and compactly supported in $E_r$, the convergence
$\varphi_n\to U$ in $L^1_{\mathrm{loc}}(E_r)$ implies $\langle \varphi_n,\Delta\phi\rangle\to \langle U,\Delta\phi\rangle$.
Therefore,
\[
\int_{\mathbb C}\phi\,d\mu_{p_n}\to \frac{1}{2\pi}\langle U,\Delta\phi\rangle
=\frac{1}{2\pi}\langle \Delta U,\phi\rangle
=\int_{\mathbb C}\phi\,d\mu.
\]
Hence $\int \phi\,d\mu_{p_n}\to \int \phi\,d\mu$ for all $\phi\in C_c^\infty(E_r)$, and by standard approximation argument, the same holds for all $\phi\in C_c(E_r)$.

\medskip

Let $\phi\in C_c(\mathbb C)$ and $\varepsilon>0$.
Since $\mu(\{0\})=0$ and by assumption \eqref{eq:assump_no_accumulation_zero_origin}, choose $r>0$ so small that $\mu(D(0,2r))\le \varepsilon$ and $\mu_{p_n}(D(0,2r))\le \varepsilon$ for all $n$ sufficiently large.
Let $\ccW_r\in C_c(\mathbb C)$ satisfy $0\le \ccW_r\le 1$, $\ccW_r\equiv 1$ on $D(0,r)$, and
$\mathrm{supp}(\ccW_r)\subset D(0,2r)$.
Decompose
\[
\int \phi\,d\mu_{p_n}-\int \phi\,d\mu
=
\int \phi(1-\ccW_r)\,d(\mu_{p_n}-\mu)
+
\int \phi\ccW_r\,d(\mu_{p_n}-\mu).
\]
The function $\phi(1-\ccW_r)$ has compact support contained in $E_r$,  hence by the preceding convergence on $E_r$
\[
\int \phi(1-\ccW_r)\,d\mu_{p_n}\to \int \phi(1-\ccW_r)\,d\mu.
\]
For the second term, we use $|\phi\ccW_r|\le \|\phi\|_\infty \mathbf{1}_{D(0,2r)}$ to obtain
\[
\Big|\int \phi\ccW_r\,d(\mu_{p_n}-\mu)\Big|
\le \|\phi\|_\infty\Big(\mu_{p_n}(D(0,2r))+\mu(D(0,2r))\Big) \le 2\|\phi\|_\infty \varepsilon,
\]
where the last inequality holds for all sufficiently large $n$.
Combining the two pieces and then sending $\varepsilon\downarrow 0$ yields
\[
\int \phi\,d\mu_{p_n}\to \int \phi\,d\mu
\qquad\text{for every }\phi\in C_c(\mathbb C),
\]
which is the vague convergence $\mu_{p_n}\Rightarrow \mu$ on $\mathbb C$.

 Since each $\mu_{p_n}$ is a probability measure, the vague convergence together with $\mu(\mathbb C)=1$ implies tightness, and hence
$\mu_{p_n}\Rightarrow \mu$ weakly as probability measures on~$\mathbb C$.

\bibliographystyle{amsplain}
\bibliography{logconcave}

\end{document}